\renewcommand{\uwave}{}
\newcommand{\real}{\mathbb{R}}
\DeclareMathOperator{\tr}{tr}
\DeclareMathOperator{\End}{End}\DeclareMathOperator{\Sym}{Sym}
\DeclareMathOperator{\Vol}{Vol}
\DeclareMathOperator{\grad}{grad}\DeclareMathOperator{\Ric}{Ric}\DeclareMathOperator{\Hess}{Hess}
\DeclareMathOperator{\Met}{Met}
\theoremstyle{plain}
\newtheorem{proposition}{Proposition}[section]
\newtheorem{lemma}[proposition]{Lemma}
\newtheorem{theorem}[proposition]{Theorem}
\newtheorem*{theorem*}{Theorem}
\theoremstyle{definition}
\newtheorem{definition}[proposition]{Definition}
\newtheorem{remark}[proposition]{Remark}
\numberwithin{equation}{section}
\newcounter{author}
\renewcommand*\author[1]{%
  \stepcounter{author}%
  \ifnum\c@author=1
    \gdef\@author{#1}%
  \else
    \xdef\@author{\unexpanded\expandafter{\@author\and#1}}%
  \fi
  \csgdef{author@\the\c@author}{#1}}
\newcommand*\email[1]{%
  \csgdef{email@\the\c@author}{#1}}
\newcommand*\address[1]{%
  \csgdef{address@\the\c@author}{#1}}
  \xdef\author@count{\the\c@author}%
\newcommand*\print@authors{%
  \ifnum\c@author>\author@count
  \else
    \print@author{\the\c@author}%
    \advance\c@author by 1
    \expandafter\print@authors
  \fi}
\newcommand*\print@author[1]{%
  \par\medskip
  \begin{tabular}{@{}l@{}}%
    \textsc{\csuse{author@#1}}\\
    \csuse{address@#1}\\
    \textit{E-mail address}:
    \href{mailto:\csuse{email@#1}}{\csuse{email@#1}}
  \end{tabular}}
\title{Scalar curvature and the multiconformal class of a direct product Riemannian manifold}
\author{Nobuhiko Otoba}
\address{Universit\"at Regensburg, 93040 Regensburg, Germany}
\email{nobuhiko.otoba@ur.de}
\author{Saskia Roos}
\address{Max Planck Institute for Mathematics, Vivatsgasse 7, 53111 Bonn, Germany}
\email{saroos@mpim-bonn.mpg.de}
\date{}
\begin{document}
\maketitle

\begingroup
\renewcommand{\thefootnote}{}
	\footnotetext{2010 \textit{Mathematics Subject Classification}. Primary 53C21.}
	\footnotetext{\textit{Keywords and phrases}. Positive scalar curvature, constant scalar curvature, the Yamabe problem, warped product, umbilic product, twisted product. }
\endgroup

\begin{abstract}
For a closed, connected direct product Riemannian manifold $(M, g)=(M_1\times\cdots\times M_l, g_1\oplus\cdots\oplus  g_l)$, we define its multiconformal class $ [\![ g ]\!]$ as the totality $\{f_1^2g_1\oplus \cdots\oplus f_l^2g_l\}$ of all Riemannian metrics obtained from multiplying the metric $g_i$ of each factor $M_i$ by a function $f_i^2>0$ on the total space $M$.  A multiconformal class $ [\![ g ]\!]$ contains not only all warped product type deformations of $g$ but also the whole conformal class $[\tilde{g}]$ of every $\tilde{g}\in [\![ g ]\!]$. In this article, we prove that $ [\![ g ]\!]$ contains a metric of positive scalar curvature if and only if the conformal class of some factor $(M_i, g_i)$ does, under the technical assumption $\dim M_i\ge 2$. We also show that, even in the case where every factor $(M_i, g_i)$ has positive scalar curvature, $ [\![ g ]\!]$ contains a metric of scalar curvature constantly equal to $-1$ and with arbitrarily large volume, provided $l\ge 2$ and $\dim M\ge 3$. In this case, such negative scalar curvature metrics within $ [\![ g ]\!]$ for $l=2$ cannot be of any warped product type. 
\end{abstract}

\tableofcontents

\section{Introduction}
The notion of \textit{warped products} dates back to the work of Bishop--O'Neill \cite{MR0251664}, in which they showed that a variety of product manifolds carry metrics of negative sectional curvature. 
Here, for two Riemannian manifolds $(B, \check{g})$ and $(F, \hat{g})$, their warped product with respect to a positive function $f$ on the base space $B$ is defined as the Riemannian manifold $(B\times F, \check{g}\oplus f^2\hat{g})$. 
%
Another ubiquitous way of deforming a Riemannian metric by functions is \textit{conformal change}. 
Here, for a Riemannian manifold $(M, g)$, we take a positive function $f$ on $M$ and define a new Riemannian manifold $(M, f^2g)$, in which the angle of two tangent vectors is the same as that of $(M, g)$. 
%
Unifying the notions of warped product and conformal change, Koike \cite{MR1084347} defined the \textit{twisted product} of two Riemannian manifolds $(M_1, g_1)$, $(M_2, g_2)$ with respect to positive functions $f_1, f_2$ on $M_1\times M_2$ as the Riemannian manifold $(M_1\times M_2, f_1^2g_1\oplus f_2^2g_2)$. 
See also Koike \cite{MR1212293} for the case of more than two factors; 
for more details about the terminology, we refer to Remark \ref{rmk:terminology}. 
His main interest seemed to be the extrinsic geometry of the leaves $M_1$ and $M_2$, such as total umbilicity. 
It is the aim of the present article to study twisted product metrics in the sense of Koike from a more intrinsic point of view, with particular focus on scalar curvature. 

Here and henceforth, we adopt the following notation. 
For a smooth manifold $M$, we define $C^{\infty}(M)=\{\varphi: M\to\real\mid \text{$\varphi$ is $C^{\infty}$ smooth}\}$, $C^{\infty}_+(M)=\{u\in C^{\infty}(M)\mid u>0\}$. 
For a Riemannian metric $g$ on $M$, $R^g$ and $d\mu^g$ denote the scalar curvature and the volume element of $g$, respectively. 
For $\varphi\in C^{\infty}(M)$, $\grad^g\varphi$ and $\Delta^g\varphi$ are the gradient vector field and the Laplacian of $\varphi$, respectively, so that $\int_M\lvert\grad^g\varphi\rvert^2d\mu^g=-\int_M\varphi(\Delta^g\varphi) d\mu^g$ if $M$ is closed (i.e.~compact without manifold boundary). 

Firstly, we recall some fundamental results about the scalar curvature of Riemannian metrics within a conformal class. 
Let $(M^m, g)$ be a closed connected Riemannian manifold. The scalar curvature of the conformally related metric $\tilde{g}=f^2g$, $f\in C^{\infty}_+(M)$, satisfies
\begin{align}\label{eq:PDEConformalChange}
-\frac{4(m-1)}{m-2}\Delta^gu+R^gu=R^{\tilde{g}}u^{\frac{m+2}{m-2}}
\end{align}
if we assume $m\ge 3$ and set $u=f^{(m-2)/2}$. 
Let
\begin{align*}
\lambda_0=\inf_{\substack{\varphi: M\to\real, \\ \varphi\not\equiv 0}}\frac{\int_{M}\left(\frac{4(m-1)}{m-2}\lvert\grad^{g}\varphi\rvert^2+R^{g}\varphi^2\right)d\mu^{g}}{\int_{M}\varphi^2d\mu^{g}}>-\infty
\end{align*}
be the smallest eigenvalue of the operator $-\frac{4(m-1)}{m-2}\Delta^g+R^g$ and $v$ the corresponding eigenfunction so normalized that $\min_{M}v=1$. 
On the one hand, setting $u=v$ in \eqref{eq:PDEConformalChange}, we see that the metric $v^{4/(m-2)}g$ has scalar curvature $\lambda_0v^{-4/(m-2)}$, which has the same sign as $\lambda_0$. 
On the other hand, $\Delta^g u$ attains zero somewhere on $M$ because $\int_M(\Delta^gu)d\mu^g=0$, 
and thus $R^g$ and $R^{\tilde{g}}$ cannot have different signs by \eqref{eq:PDEConformalChange}. 
It follows that, provided $m\ge 3$, every conformal class $[g]$ on a closed connected manifold contains a metric $\tilde{g}$ whose scalar curvature satisfies either $R^{\tilde{g}}>0$, $R^{\tilde{g}}\equiv 0$, or $R^{\tilde{g}}< 0$, and these three cases are mutually exclusive. 
The same statement holds also for $m=1$ and $2$, where we note that the scalar curvature of a $1$-dimensional Riemannian manifold is always constantly equal to zero and that $\int_{M^2}R^gd\mu^g=4\pi\chi(M^2)$ in dimension $2$ by the Gauss--Bonnet theorem. 
It is now well known that there exists a metric of constant scalar curvature in every conformal class of a closed connected manifold $M^m$; 
this follows from the uniformization theorem for Riemann surfaces if $m=2$ and from the resolution of the Yamabe problem if $m\ge 3$ (cf.~Yamabe \cite{MR0125546}, Trudinger \cite{MR0240748}, Aubin \cite{MR0431287}, Schoen \cite{MR788292}). 

Secondly, we recall the following related observations on the scalar curvature of warped product metrics, which can be found in Dobarro--Lami Dozo \cite[Theorems 3.1--3.3]{MR896013}. Let $(M^m, g)=(M_1^{m_1}, g_1)\times (M_2^{m_2}, g_2)$ be a direct product of closed connected Riemannian manifolds, $f_2\in C^{\infty}_+(M_1)$, and $\tilde{g}=g_1\oplus f_2^2g_2$ a warped product metric on $M$. Then 
\begin{align}\label{eq:DobarroLamiDozoEq}
-\frac{4m_2}{m_2+1}\Delta^{g_1}u+R^{g_1}u+R^{g_2}u^{\frac{m_2-3}{m_2+1}}=R^{\tilde{g}}u
\end{align}
where we set $u=f_2^{(m_2+1)/2}$. Differentiation of \eqref{eq:DobarroLamiDozoEq} with respect to vector fields tangent to $M_2$ shows that $R^{g_2}$ is constant if $R^{\tilde{g}}$ is constant. 
We define $\lambda_0\in\real$ and $v\in C^{\infty}_+(M_1)$ so that  
\begin{align*}
&\lambda_0=\inf_{\substack{\varphi: M_1\to\real, \\ \varphi\not\equiv 0}}\frac{\int_{M_1}\left(\frac{4m_2}{m_2+1}\lvert\grad^{g_1}\varphi\rvert^2+R^{g_1}\varphi^2\right)d\mu^{g_1}}{\int_{M_1}\varphi^2d\mu^{g_1}}, \\
&-\frac{4m_2}{m_2+1}\Delta^{g_1}v+R^{g_1}v=\lambda_0 v, \quad \min_{M_1} v=1. 
\end{align*}
Integration by parts then yields 
\begin{align*}
(R^{\tilde{g}}-\lambda_0)\int_{M_1}uvd\mu^{g_1}
=R^{g_2}\int_{M_1}u^{\frac{m_2-3}{m_2+1}}vd\mu^{g_1}, 
\end{align*}
provided both $R^{\tilde{g}}$ and $R^{g_2}$ are constant; 
in particular, $R^{\tilde{g}}-\lambda_0$ and $R^{g_2}$ must have the same sign. 
We note that $\lambda_0>0$, $\lambda_0=0$, and $\lambda_0<0$ hold, respectively, if $R^{g_1}\ge 0$ and $R^{g_1}\not\equiv 0$, if $R^{g_1}\equiv 0$, and if $\int_{M_1}R^{g_1}d\mu^{g_1}\le 0$ and $R^{g_1}\not\equiv 0$. 
We summarize the sign restrictions thus obtained for warped product metrics of constant scalar curvature in Table \ref{table:WapredProducts}. 
\begin{table}[h]\centering
\begin{tabular}{c|ccc}
&$R^{g_2}>0$&$R^{g_2}=0$&$R^{g_2}<0$\\\hline
$R^{g_1}\ge 0$, $R^{g_1}\not\equiv 0$&$R^{\tilde{g}}>0$&$R^{\tilde{g}}>0$&\\
$R^{g_1}\equiv 0$&$R^{\tilde{g}}>0$&$R^{\tilde{g}}=0$&$R^{\tilde{g}}<0$\\
$\int_{M_1}R^{g_1}d\mu^{g_1}\le 0$, $R^{g_1}\not\equiv 0$&&$R^{\tilde{g}}<0$&$R^{\tilde{g}}<0$
\end{tabular}
\caption{Warped product metrics of constant scalar curvature}\label{table:WapredProducts}
\end{table}
We take the direct product of constant scalar curvature metrics to see that each case in Table \ref{table:WapredProducts} is nonempty and that, rescaling the metric $g_2$ by constants, there is no sign restriction on the scalar curvature in the two remaining cases in Table \ref{table:WapredProducts} that are left blank. 
The existence question for warped product metrics of constant scalar curvature which are not direct product is also considered in the same article \cite{MR896013}. 
The scalar curvature of warped product type metrics are also studied in \cite{MR563189, MR695711, MR1406047, MR1651555, MR1639860, MR1822102, MR2157416, MR2442173, MR2434666}. 

We introduce the following perspective that unifies the previous sign restrictions on scalar curvature. 
For a direct product Riemannian manifold $(M, g)=(M_1\times\cdots\times M_l, g_1\oplus \cdots\oplus  g_l)$, we define its multiconformal class $ [\![ g ]\!]$ by 
\begin{equation*}
 [\![ g ]\!]=\{f_1^2g_1\oplus \cdots\oplus f_l^2g_l\mid f_1, \dots, f_l\in C^{\infty}_+(M)\}. 
\end{equation*}
We emphasize that the multiconformal factor $f_i$ does not have to be constant along any $M_j$. 
Within a multiconformal class, we may 
(1) conformally change the representative metrics $g_1, \dots, g_l$ on $M_1, \dots, M_l$, respectively, 
(2) conformally deform an arbitrary metric $\tilde{g}\in [\![ g ]\!]$, and 
(3) consider warped product type metrics of all kinds.  
Our main result is the following trichotomy. See Remark \ref{rmk:DimensionalAssumption} regarding its dimensional assumption. 
\begin{theorem}\label{thm:main}
Let $(M^m, g)=(M_1^{m_1}\times\cdots\times M_l^{m_l}, g_1\oplus \cdots\oplus  g_l)$ be a direct product of closed connected Riemannian manifolds. 
Assume $m\ge 3$, $l\ge 2$, and $m_1, \dots, m_l\ge 2$. Then the following trichotomy holds. 
\begin{enumerate}
	\item The multiconformal class $ [\![ g ]\!]$ contains a metric of positive scalar curvature 
	if and only if there exists $i\in\{1, \dots, l\}$ such that the conformal class $[g_i]$ contains a metric of positive scalar curvature. 
	\item The multiconformal class $ [\![ g ]\!]$ does not contain a metric of positive scalar curvature 
	and there exists a scalar flat metric of $ [\![ g ]\!]$ 
	if and only if $[g_i]$ contains a scalar flat metric for every $i\in\{1, \dots, l\}$. 
	In this case, if $\tilde{g}\in [\![ g ]\!]$ has nonnegative scalar curvature, 
	then $\tilde{g}$ is necessarily scalar flat and direct product. 
	\item The multiconformal class $ [\![ g ]\!]$ does not contain a metric of nonnegative scalar curvature 
	if and only if $[g_i]$ does not contain a metric of nonnegative scalar curvature for every $i\in\{1, \dots, l\}$ 
	and there exists $i\in\{1, \dots, l\}$ such that $[g_i]$ contains a metric of negative scalar curvature.  
\end{enumerate}
\end{theorem}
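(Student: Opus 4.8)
The plan is to reduce the entire statement to the conformal trichotomy on the individual factors, recalled in the introduction, using as the sole analytic input a formula for the scalar curvature of a twisted product metric $\tilde g=f_1^2g_1\oplus\cdots\oplus f_l^2g_l$. A preliminary observation takes care of the part of the trichotomy internal to $[\![g]\!]$: for any $\tilde g\in[\![g]\!]$ and any $\phi\in C^\infty_+(M)$ one has $\phi^2\tilde g=(\phi f_1)^2g_1\oplus\cdots\oplus(\phi f_l)^2g_l\in[\![g]\!]$, so the whole conformal class $[\tilde g]$ is contained in $[\![g]\!]$; applying \eqref{eq:PDEConformalChange} and the conformal trichotomy to $\tilde g$ then shows that whenever some $\tilde g\in[\![g]\!]$ satisfies $R^{\tilde g}\ge 0$ with $R^{\tilde g}\not\equiv 0$, its lowest conformal-Laplacian eigenvalue is positive and $[\![g]\!]$ already carries a metric of positive scalar curvature. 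Hence the three possibilities --- $[\![g]\!]$ contains a metric of positive scalar curvature; $[\![g]\!]$ contains a scalar flat metric but no positive one; $[\![g]\!]$ contains no metric of nonnegative scalar curvature at all --- are mutually exclusive and exhaustive, and what remains is to read each one off from the factors.

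The ``if'' halves of (1) and (2) are direct constructions. If $[g_i]$ contains a metric $h_i=f_i^2g_i$ with $R^{h_i}>0$, extend $f_i$ to $C^\infty_+(M)$ by pullback and set $\tilde g_t=h_i\oplus\bigoplus_{j\neq i}t^2g_j\in[\![g]\!]$; since $R^{\tilde g_t}=R^{h_i}+t^{-2}\sum_{j\neq i}R^{g_j}\to R^{h_i}>0$ uniformly as $t\to\infty$, $\tilde g_t$ has positive scalar curvature for $t$ large. If every $[g_i]$ contains a scalar flat metric, their direct product is a scalar flat metric in $[\![g]\!]$. Both directions of (3) then follow from (1), (2) and the exhaustiveness observation above by Boolean manipulation. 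So the real content is the two ``only if'' assertions: if some $\tilde g\in[\![g]\!]$ has $R^{\tilde g}>0$, then some $[g_i]$ contains a metric of positive scalar curvature; and if some $\tilde g\in[\![g]\!]$ has $R^{\tilde g}\equiv 0$ while no $[g_i]$ contains a positive-scalar metric, then every $[g_i]$ contains a scalar flat metric and $\tilde g$ is a direct product.

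For both, I would begin from the twisted-product scalar curvature formula, which exhibits $R^{\tilde g}$ as a sum over $i$ of an intrinsic contribution of the form $f_i^{-2}\bigl(R^{g_i}+(\text{$M_i$-directional derivatives of }f_i)\bigr)$ plus cross terms assembled from the variations of each $f_i$ along the remaining factors. After a power substitution $u_i=f_i^{p_i}$ chosen, in the spirit of \eqref{eq:PDEConformalChange} and \eqref{eq:DobarroLamiDozoEq}, to normalize the pure $\lvert\grad^{g_i}u_i\rvert^2$ term, this becomes a coupled elliptic system for $u_1,\dots,u_l$. I would multiply it by the density $\rho=\prod_jf_j^{m_j}$ (so $\rho\,d\mu^g=d\mu^{\tilde g}$) and by a product test function $\prod_jv_j$, where $v_i\in C^\infty_+(M_i)$ realizes the lowest eigenvalue $\lambda_0^{(i)}$ of the conformal Laplacian of $(M_i,g_i)$, and integrate over $M$, carrying out Fubini and integration by parts one factor at a time, with the goal of reaching an identity
\begin{equation*}
\int_M R^{\tilde g}\,\Bigl(\textstyle\prod_jv_j\Bigr)\,d\mu^{\tilde g}=\sum_{i=1}^l\lambda_0^{(i)}A_i+B,\qquad A_i>0,\ \ B\le 0,
\end{equation*}
in which $A_i$ is produced by the Rayleigh quotient evaluated on $v_i$ and $B$ collects the integrated cross terms. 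Granting this, $R^{\tilde g}>0$ forces the left side to be positive, hence some $\lambda_0^{(i)}>0$, which is exactly (1); and if $R^{\tilde g}\equiv 0$ while every $\lambda_0^{(i)}\le 0$, the identity reads $0=\sum_i\lambda_0^{(i)}A_i+B$ with every summand $\le 0$, so all $\lambda_0^{(i)}=0$ and $B=0$, and the equality cases identify each $f_i$ with a constant multiple of the pulled-back eigenfunction and make the cross terms vanish, yielding $\tilde g=\bigoplus_i(f_i^2g_i)$, a direct product of scalar flat metrics. In case (2) a general $\tilde g\in[\![g]\!]$ with $R^{\tilde g}\ge 0$ must have $R^{\tilde g}\equiv 0$ by the exhaustiveness observation, so it is covered by this.

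The main obstacle is exactly the passage to the displayed identity: one has to verify that, after the substitutions $u_i=f_i^{p_i}$, the coupling terms reorganize, under integration by parts against $\rho\prod_jv_j$, into a single remainder $B$ of definite sign, and that the equality analysis is rigid enough to pin every $f_i$ to a constant. A second, more routine, point is the dimensional bookkeeping: the conformal-Laplacian argument on $[\tilde g]$ and \eqref{eq:PDEConformalChange} require $m=\sum_i m_i\ge 3$, while for a factor of dimension $m_i=2$ the exponent $p_i$ and the Rayleigh estimate have to be arranged using the two-dimensional normalization (Gauss--Bonnet) recalled in the introduction --- which is the point at which $m_i\ge 2$, rather than just $m_i\ge 1$, is needed.
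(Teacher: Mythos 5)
Your overall architecture is sound and close in spirit to the paper's: the mutual exclusivity via $[\tilde g]\subset[\![ g ]\!]$ and \eqref{eq:PDEConformalChange}, the easy directions by rescaling factors and taking direct products, and the reduction of (3) to (1) and (2) are all fine. But the entire load-bearing step --- the displayed identity $\int_M R^{\tilde g}\bigl(\prod_j v_j\bigr)d\mu^{\tilde g}=\sum_i\lambda_0^{(i)}A_i+B$ with $A_i>0$ and $B\le 0$, together with its equality analysis --- is only announced as a ``goal'' and then used under ``granting this.'' That is precisely the nontrivial content of the theorem, and nothing in your sketch shows that your particular weighting (the density $\prod_jf_j^{m_j}$, i.e.\ $d\mu^{\tilde g}$, times pulled-back conformal-Laplacian eigenfunctions $v_i$, after power substitutions $u_i=f_i^{p_i}$) actually produces a remainder of definite sign: integrating by parts against $v_i$ creates cross terms $\langle\grad v_i,\grad f_j\rangle$ with no a priori sign, and the quadratic form in the $\grad f_j$'s that survives depends delicately on the chosen exponents. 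In the paper this is resolved by a different normalization: one first replaces each $g_i$ by a constant scalar curvature representative of $[g_i]$ (uniformization for $m_i=2$, Yamabe for $m_i\ge 3$), so that no eigenfunction test functions are needed, and then integrates the multiconformal scalar curvature formula of Theorem \ref{thm:MulticonformalDeformation} against the plain volume element $d\mu^g$ (the weight $q_1=\dots=q_l=0$ in \eqref{eq:KeyIntegralFormula}); after the change of variables of Lemma \ref{lem:ChangeOfVariables}, the remainder is $\sum_i\int_M\bigl(\sum_{j,k}b^i_{jk}\langle\grad^g_if_j,\grad^g_if_k\rangle/f_jf_k\bigr)f_i^{-2}d\mu^g$ with explicit symmetric matrices $B^i=-(m_jm_k)_{jk}-\mathrm{diag}(m_1,\dots,m_i-2,\dots,m_l)$, whose negative definiteness (with equality only for all $f_i$ constant) gives both the sign and the rigidity in case (2).

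This also shows that your account of where the hypothesis $m_1,\dots,m_l\ge 2$ enters is off: it is not a matter of arranging exponents via the two-dimensional Gauss--Bonnet normalization. The dimension restriction is used exactly in the negative definiteness of the matrices $B^i$ (the diagonal entry $m_i-2$ must not create a null direction compatible with the rank-one part), and, relatedly, a circle factor is problematic because $d\theta^2$ can be conformally deformed without changing the (zero) scalar curvature, cf.\ Remark \ref{rmk:DimensionalAssumption}. So as written your argument has a genuine gap at its analytic core; to close it you would either have to carry out and sign-check your eigenfunction-weighted integration by parts in full (which I do not expect to work as stated), or switch to the paper's normalization (constant scalar curvature representatives plus the unweighted integral formula \eqref{eq:KeyIntegralFormula}).
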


Our next result, Theorem \ref{thm:sub1}, shows that even in the case (1) of Theorem \ref{thm:main}, $ [\![ g ]\!]$ contains a metric of scalar curvature constantly equal to $-1$ and with arbitrarily large volume, provided $l\ge 2$ and $\dim M\ge 3$. We observe from Theorem \ref{thm:sub2} that  such negative scalar curvature metrics within $ [\![ g ]\!]$ for $l=2$ cannot be of any generalized warped product type (cf.~Table \ref{table:WapredProducts}). 

\begin{theorem}\label{thm:sub1}
There exists a sequence $\{\tilde{g}^{(n)}\}_{n=1}^{\infty}\subset [\![ g ]\!]$ of metrics multiconformal to $g$ such that the scalar curvature of $\tilde{g}^{(n)}$ is constantly equal to $-1$ on $M$ for every $n\ge 1$ and that $\Vol\left(M, \tilde{g}^{(n)}\right)\to\infty$ as $n\to\infty$. 
\end{theorem}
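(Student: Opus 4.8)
The plan is to deduce the statement from the resolution of the Yamabe problem. Since $[\![ g ]\!]$ contains, together with any metric it contains, the whole conformal class of that metric, and since $M$ is closed, connected, of dimension $m\ge 3$, it suffices to exhibit a sequence of metrics $g^{(n)}\in [\![ g ]\!]$ whose Yamabe invariants satisfy $Y([g^{(n)}])\to -\infty$. Indeed, the Yamabe minimizer of $[g^{(n)}]$, rescaled by a constant so that its scalar curvature is constantly $-1$, then lies in $[\![ g ]\!]$; and because a metric of constant scalar curvature $R$ has Yamabe quotient $R\cdot\Vol^{2/m}$, which equals the Yamabe invariant when the metric realizes it, the volume of that rescaled metric is exactly $|Y([g^{(n)}])|^{m/2}\to\infty$. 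So everything is reduced to producing conformal classes inside $[\![ g ]\!]$ of arbitrarily negative Yamabe invariant.

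To construct them, I would group the factors as $M=M_1\times M_2$ (this uses $l\ge 2$), with the grouping chosen so that $m_2:=\dim M_2\ge 2$, which is possible since $m\ge 3$. Fix a nonnegative $\eta\in C^{\infty}(M_2)$ equal to $1$ on a small ball and supported in a slightly larger one, and set $g^{(n)}=g_1\oplus e^{-2K_n\eta}g_2$ with $K_n\to\infty$. This metric is multiconformal to $g$ (take $f_1\equiv 1$, $f_2=e^{-K_n\eta}$), it is in fact a \emph{direct} product, and its scalar curvature is $R^{g_1}+R^{e^{-2K_n\eta}g_2}$, where the conformal-change formula on $(M_2,g_2)$ shows the second summand is strongly negative (of size $\sim K_n^2|\nabla^{g_2}\eta|^2\,e^{2K_n\eta}$) on the region where $\eta$ decreases from $1$ to $0$. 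One then bounds $Y([g^{(n)}])$ from above by substituting the explicit function $u=e^{\sigma K_n\eta}$, for a suitably chosen constant $\sigma$, into the Yamabe functional of $g^{(n)}$; after integrating by parts in $M_2$ (which partially cancels the $\Delta^{g_2}\eta$ and $|\nabla^{g_2}\eta|^2$ contributions), the numerator comes out dominated by $-(\mathrm{const})\,K_n e^{bK_n}$ with $b=m_1/(m-1)>0$ — the leading contribution arising from the thin shell where $\eta$ is close to $1$ but $|\nabla^{g_2}\eta|\ne 0$ — while the denominator stays bounded and bounded below by a positive constant, and the unavoidable contributions of $R^{g_1}$ and $R^{g_2}$ are only $O(1)$ and $O(e^{bK_n})$. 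Hence $Y([g^{(n)}])\to-\infty$, and the first paragraph concludes.

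The main obstacle is exactly this second step. It is \emph{not} enough to drive the first eigenvalue of the conformal Laplacian of $g^{(n)}$ to $-\infty$, since that quantity is metric-dependent, not conformally invariant; one must make the Yamabe invariant itself diverge, which forces one to beat the positive contributions of the ambient curvatures $R^{g_1},R^{g_2}$ by an \emph{unbounded} factor in a scale-invariant quantity. This succeeds because of two features. First, the coefficient of the gradient term in the Yamabe quadratic form on $M$ is governed by the ambient dimension $m$, which strictly exceeds $m_2$; this mismatch makes the part of the form coming from the $M_2$-directions indefinite no matter how positively curved the factors are, so that the test function $u$ above genuinely decreases the functional. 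Second, choosing $\eta$ with a true plateau keeps $|\nabla^{g_2}\eta|^2$ of order one on a set where the weight $e^{bK_n\eta}$ is \emph{not} exponentially small, which is what supplies the extra power of $K_n$ over the $R^{g_2}$-term. (In accordance with Theorem \ref{thm:sub2}, although $g^{(n)}$ itself is a direct product, the metric $\tilde g^{(n)}$ that is actually produced — the Yamabe representative of $[g^{(n)}]$ — is a genuinely twisted metric, not a warped product.)
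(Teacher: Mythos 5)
Your first paragraph is exactly the paper's reduction: Section~\ref{sect:InfYamabeConst} proves $\inf_{[\tilde{g}]\subset [\![ g ]\!]}\mu(M,[\tilde{g}])=-\infty$ (Theorem~\ref{thm:InfYamabeConstant}) and then deduces Theorem~\ref{thm:sub1} by taking the unit-volume Yamabe metrics, whose scalar curvature is the constant $\mu\to-\infty$, and rescaling them to scalar curvature $-1$, so that the volume is $\lvert\mu\rvert^{m/2}\to\infty$. Where you genuinely differ is in how the unboundedness of the Yamabe constants is produced. The paper stays inside its $\rho_i$-calculus from Section~\ref{sect:Formulas}: it takes both multiconformal factors of the form $f_j=e^{a_j\circ\varphi}$ with $\varphi$ a function on a block of dimension $\ge 2$, chooses $a_1=\sin(\sqrt{\alpha}\,\cdot)$, $a_2=-\beta\sin(\sqrt{\alpha}\,\cdot)$ with $\beta$ making the relevant quadratic form negative, lets the frequency $\alpha\to\infty$ to force $\int_M\frac{R^g_1+\rho_1}{f_1^2}f_1^{m_1}f_2^{m_2}d\mu^g<0$, and then applies Lemma~\ref{lem:YamabeInf}, shrinking one block by $\varepsilon$ so that $E(\tilde{g}_\varepsilon)\to-\infty$ along a family of conformal classes. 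You instead fix the product-type metric $g_1\oplus e^{-2K_n\eta}g_2$ and estimate the Yamabe quotient of the explicit test function $e^{\sigma K_n\eta}$, the divergence being driven by the amplitude $K_n$ rather than by high frequency plus block-shrinking; this bypasses the integral formulas and Lemma~\ref{lem:YamabeInf} entirely and is more self-contained, at the price of an asymptotic analysis of concentrating weights. The underlying mechanism (all conformal factors driven by one function on a block of dimension $\ge 2$, exploiting the indefiniteness of the resulting quadratic form) is the same, and your value $b=m_1/(m-1)$ shows you carried out the exponent bookkeeping correctly; also, your grouping with $m_2\ge 2$ and a nonempty complementary block is always available under $l\ge 2$, $m\ge 3$, matching the paper's reduction to $l=2$ with one block of dimension $\ge 2$.

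One detail of your sketch needs tightening. On the set where $\lvert\nabla^{g_2}\eta\rvert$ is of order one you have $\eta\le 1-\delta$, so the weight there is $e^{b(1-\delta)K_n}$, which is smaller than the plateau weight $e^{bK_n}$ by the factor $e^{-b\delta K_n}$; this swallows the prefactor $K_n^2$, so, as literally stated, your gradient term does not dominate the $O(e^{bK_n})$ contribution of $R^{g_2}$ coming from the plateau. The fix is either a Laplace-type estimate up to the plateau edge, where $\eta\to 1$ while $\lvert\nabla\eta\rvert\to 0$ and one still gets $K_n^2\int\lvert\nabla\eta\rvert^2e^{bK_n\eta}d\mu^{g_2}\gg\int e^{bK_n\eta}d\mu^{g_2}$, or, more simply, dropping the plateau and giving $\eta$ a nondegenerate maximum, in which case the gain over the $R^{g_2}$-term is a clean factor of $K_n$. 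With that adjustment, and with $\sigma$ chosen (as your value of $b$ indicates) so that the exponents in the $R^{g_1}$-term and in the denominator are nonpositive, your argument goes through and yields the theorem.
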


\begin{theorem}\label{thm:sub2}
Assume $l=2$, $R^{g_1}, R^{g_2}\ge 0$. If $f_i$ is constant along $M_i$ for $i=1, 2$, then the scalar curvature of $f_1^2g_1\oplus  f_2^2g_2$ cannot be nonpositive everywhere and negative somewhere at the same time. 
\end{theorem}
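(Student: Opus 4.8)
We sketch the argument. Write $\tilde g:=f_1^2g_1\oplus f_2^2g_2$. The plan is to show that under the hypotheses the total scalar curvature is nonnegative, $\int_M R^{\tilde g}\,d\mu^{\tilde g}\ge 0$; granting this, the theorem is immediate, for if $R^{\tilde g}\le 0$ everywhere then $\int_M R^{\tilde g}\,d\mu^{\tilde g}\le 0$ too, hence $\int_M R^{\tilde g}\,d\mu^{\tilde g}=0$, and since $R^{\tilde g}$ is continuous and $d\mu^{\tilde g}$ is a positive density on the closed manifold $M$ this forces $R^{\tilde g}\equiv 0$; in particular $R^{\tilde g}$ cannot be negative at any point.

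To set up the integral, observe that ``$f_i$ is constant along $M_i$'' means exactly $f_1\in C^\infty_+(M_2)$ and $f_2\in C^\infty_+(M_1)$, and that $d\mu^{\tilde g}=f_1^{m_1}f_2^{m_2}\,d\mu^{g}$. Specializing the scalar curvature formula for a metric of the form $f_1^2g_1\oplus f_2^2g_2$ to this situation gives
\begin{align*}
R^{\tilde g}=\frac{R^{g_1}}{f_1^2}+\frac{R^{g_2}}{f_2^2}-\frac{1}{f_1^2}\!\left(2m_2\frac{\Delta^{g_1}f_2}{f_2}+m_2(m_2-1)\frac{\lvert\grad^{g_1}f_2\rvert^2}{f_2^2}\right)-\frac{1}{f_2^2}\!\left(2m_1\frac{\Delta^{g_2}f_1}{f_1}+m_1(m_1-1)\frac{\lvert\grad^{g_2}f_1\rvert^2}{f_1^2}\right),
\end{align*}
which for $f_1\equiv 1$ recovers the warped product identity \eqref{eq:DobarroLamiDozoEq} and for $f_2\equiv 1$ the symmetric one. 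Multiplying by $f_1^{m_1}f_2^{m_2}$ and integrating over $M=M_1\times M_2$ with Fubini's theorem splits $\int_M R^{\tilde g}\,d\mu^{\tilde g}$ into four pieces.

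The two curvature pieces $\int_M R^{g_1}f_1^{m_1-2}f_2^{m_2}\,d\mu^{g}$ and $\int_M R^{g_2}f_1^{m_1}f_2^{m_2-2}\,d\mu^{g}$ are nonnegative since $R^{g_1},R^{g_2}\ge 0$ and $f_1,f_2>0$. For the $f_2$-piece I would integrate the Laplacian term by parts along $M_1$; this is the step that uses the hypothesis, because $f_1^{m_1-2}$ is then constant on $M_1$ and passes through $\grad^{g_1}$, giving $\int_{M_1}f_2^{m_2-1}\Delta^{g_1}f_2\,d\mu^{g_1}=-(m_2-1)\int_{M_1}f_2^{m_2-2}\lvert\grad^{g_1}f_2\rvert^2\,d\mu^{g_1}$. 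Combining with the companion gradient term leaves the coefficient $2m_2(m_2-1)-m_2(m_2-1)=m_2(m_2-1)\ge 0$, so this piece equals $m_2(m_2-1)\int_M f_1^{m_1-2}f_2^{m_2-2}\lvert\grad^{g_1}f_2\rvert^2\,d\mu^{g}\ge 0$. The symmetric computation, integrating by parts along $M_2$ with $f_2^{m_2-2}$ constant there, gives for the $f_1$-piece the value $m_1(m_1-1)\int_M f_1^{m_1-2}f_2^{m_2-2}\lvert\grad^{g_2}f_1\rvert^2\,d\mu^{g}\ge 0$. Adding the four nonnegative pieces yields $\int_M R^{\tilde g}\,d\mu^{\tilde g}\ge 0$.

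The only substantive ingredient is the scalar curvature formula above; granting it, the remainder is careful bookkeeping of signs through two integrations by parts. The point is that ``$f_i$ constant along $M_i$'' is precisely what lets $f_1$ be treated as a constant in the $M_1$-integration and $f_2$ as a constant in the $M_2$-integration, so that each Laplacian term pairs cleanly with its companion gradient term and no term mixing $\grad f_1$ with $\grad f_2$ appears; without this hypothesis the pairing fails, in agreement with Theorem \ref{thm:sub1}. One should note that evaluating the formula at a point where $f_2$ attains its maximum on $M_1$ and $f_1$ attains its maximum on $M_2$ already forces $R^{\tilde g}$ to vanish there pointwise, but only the global nonnegativity of $\int_M R^{\tilde g}\,d\mu^{\tilde g}$ upgrades this to $R^{\tilde g}\equiv 0$, which is what the theorem requires.
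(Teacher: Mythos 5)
Your proof is correct and takes essentially the same route as the paper: the paper (Theorem \ref{thm:NotOfAnyWarpedProductType}, via \eqref{eq:ProofWarpedAux}) integrates the multiconformal scalar-curvature formula against a weight $f_1^{q_1}\cdots f_l^{q_l}$ with exponents chosen so that, after integrating by parts, every coefficient is nonnegative, and your choice $(q_1,q_2)=(m_1,m_2)$ (i.e.\ integrating against $d\mu^{\tilde g}$, yielding coefficients $m_i(m_i-1)\ge 0$) is exactly such a choice. The only differences are cosmetic: the paper treats general $l$ and permutation type and additionally concludes that the $f_i$ are constant, while you prove precisely the stated $l=2$ case.
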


This article is organized as follows. 
In Sect.~\ref{sect:EinsteinHilbertRestrictions}, 
	we characterize criticality with respect to the normalized Einstein--Hilbert functional restricted to a multiconformal class. 
In Sect.~\ref{sect:KillFirst}, 
	we introduce some differential operators and observe their behavior under a change of dependent variables.  
In Sect.~\ref{sect:Karcher}, 
	we compute the scalar curvature of a multiconformally related metric. 
In Sect.~\ref{sect:Formulas}, 
	we derive integral formulas which play a crucial role in the proof of Theorem \ref{thm:main}. 
We prove the trichotomy theorem (Theorem \ref{thm:SignOfMulticonformalClass}) in Sect.~\ref{sect:MulticonformalSign}, which is equivalent to Theorem \ref{thm:main}.  
The proofs of Theorems \ref{thm:sub1}, \ref{thm:sub2} are in Sects.~\ref{sect:InfYamabeConst} and \ref{sect:WarpedProduct}, respectively. 

\section{The normalized Einstein--Hilbert functional}\label{sect:EinsteinHilbertRestrictions}
Let $E: \Met\to\real$ be the normalized Einstein--Hilbert functional defined on the space $\Met$ of all Riemannian metrics on a closed connected manifold $M^m$ by 
\begin{equation*}
E(g)=\frac{\int_MR^gd\mu^g}{(\int_Md\mu^g)^{2/p_m}}. 
\end{equation*}
Here, $R^g$ is the scalar curvature of $g$, $p_m=2m/(m-2)$, and $m\ge 3$. 
For every $h\in\Gamma(\Sym^2TM^*)$, 
\begin{equation*}
\left.\frac{d}{dt}\right|_{t=0}E(g+th)
=\int_M\langle h, (2^{-1}R^g-p_m^{-1}r^g)g-\Ric^g\rangle d\mu^g
\end{equation*}
where $r^g=\int_MR^gd\mu^g/\int_Md\mu^g$.  
It follows that critical points of $E$ itself and $E$ restricted to a conformal class $[g]$, respectively, are precisely Einstein metrics on $M$ and metrics of constant scalar curvature within $[g]$. 
Note that the first variation formula for $E$ simplifies to 
\begin{equation}\label{eq:FirstVariationFormulaEinsteinHilbert}
\left.\frac{d}{dt}\right|_{t=0}E(g+th)
=-\int_M\langle h, \Ric^g-(R^g/m)g\rangle d\mu^g
\end{equation}
at a metric $g$ of constant scalar curvature. 

We introduce two intermediate notions of criticality between the constant scalar curvature and Einstein conditions. 
Fix a direct sum decomposition $TM=E_1+\cdots+E_l$ of the tangent bundle 
where each $E_i$ is a vector subbundle of fiber dimension $m_i\ge 1$, 
and let $\mathcal{P}_i\in\Gamma(\End TM)$ be the corresponding projection onto $E_i$. 
We say $g\in\Met$ is compatible with this decomposition if $E_i\perp E_j$ for all $i\neq j$ with respect to $g$. 
We denote by $\Met_{\perp}$ the subspace of all compatible Riemannian metrics. 
Note that $g\in\Met_{\perp}$ can be written uniquely as $g=g_1\oplus \cdots\oplus  g_l$ using fiberwise inner products $g_1, \dots, g_l$ on $E_1, \dots, E_l$, respectively.  
Two compatible metrics $\tilde{g}=\tilde{g}_1\oplus \cdots\oplus \tilde{g}_l$ and $g=g_1\oplus \cdots\oplus  g_l$ are said to be multiconformal to each other if there exist functions $f_1, \dots, f_l: M\to\real_{>0}$ such that $\tilde{g}_i=f_i^2g_i$ for all $i\in\{1, \dots, l\}$. 
Multiconformality defines an equivalence relation on $\Met_{\perp}$. 
We denote by $ [\![ g ]\!]$ the equivalence class of $g\in\Met_{\perp}$. 
For $g\in\Met_{\perp}$, we have 
\begin{equation}\label{eq:FundamentalInclusion}
[g]\subset [\![ g ]\!]\subset\Met_{\perp}\subset\Met. 
\end{equation}

For a compatible metric $g$, define  $\Ric^g_i\in\Gamma(\Sym^2TM^*)$ and $R^g_i\in C^{\infty}(M)$ by 
\begin{align*}
\Ric^g_i(X, Y)=\Ric^g(\mathcal{P}_iX, \mathcal{P}_iY), 
&&
R^g_i=\langle \Ric^g, g_i\rangle=\tr^g\Ric^g_i
\end{align*}
for all $X, Y\in\Gamma(TM)$. 
We note that 
\begin{equation}\label{eq:RIsSimplyTheSum}
R^g=R^g_1+\cdots+R^g_l
\end{equation}
always holds while $\Ric^g=\Ric^g_1+\cdots+\Ric^g_l$ holds if and only if $\Ric^g(\mathcal{P}_iX, \mathcal{P}_jY)=0$ for all $X, Y\in\Gamma(TM)$ whenever $i\neq j$. 

\begin{proposition}
Let $g=g_1\oplus \cdots\oplus  g_l\in\Met_{\perp}$ be a compatible metric. 
\begin{enumerate}
\item $g$ is critical with respect to the functional $E$ restricted to $ [\![ g ]\!]$ if and only if there exists a real number $c$ independent of $i$ such that $R^g_i/m_i=c$ for all $i\in\{1, \dots, l\}$. 
\item $g$ is critical with respect to the functional $E$ restricted to $\Met_{\perp}$
if and only if there exists a constant $c$ independent of $i$ such that $\Ric^g_i=cg_i$ for all $i\in\{1, \dots, l\}$. 
\end{enumerate}
\end{proposition}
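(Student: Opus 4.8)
The plan is to compute the first variation of $E$ in the directions tangent to $ [\![ g ]\!] $ and to $\Met_{\perp}$, respectively, and to read off the resulting Euler--Lagrange equations. First I would identify the relevant tangent spaces at $g$. A path $t\mapsto f_1(t)^2g_1\oplus\cdots\oplus f_l(t)^2g_l$ in $ [\![ g ]\!] $ through $g$, with $f_i(0)\equiv 1$, has velocity $h=\sum_{i=1}^l\psi_ig_i$ with $\psi_i=2\dot f_i(0)\in C^{\infty}(M)$ arbitrary; hence the tangent space of $ [\![ g ]\!] $ at $g$ is $\{\sum_i\psi_ig_i\mid\psi_i\in C^{\infty}(M)\}$. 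Since compatibility with the fixed decomposition is an open linear condition on the metric, the tangent space of $\Met_{\perp}$ at $g$ is the space of block-diagonal symmetric $2$-tensors $h=h_1\oplus\cdots\oplus h_l$ with $h_i\in\Gamma(\Sym^2E_i^*)$. Throughout I would use that $\mathcal{P}_i$ is the $g$-orthogonal projection onto $E_i$, which yields the contraction identities $\langle g_i,g\rangle=\tr^g\mathcal{P}_i=m_i$ and $\langle g_i,\Ric^g\rangle=\langle g_i,\Ric^g_i\rangle=R^g_i$, and more generally $\langle h_i,\Ric^g\rangle=\langle h_i,\Ric^g_i\rangle$ and $\langle h_i,g\rangle=\langle h_i,g_i\rangle$ whenever $h_i(X,Y)=h_i(\mathcal{P}_iX,\mathcal{P}_iY)$, because the off-block components of $g$ and $\Ric^g$ are then not seen.

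For (1), substituting $h=\sum_i\psi_ig_i$ into the first variation formula for $E$ and applying these identities gives
\[
\left.\frac{d}{dt}\right|_{t=0}E(g+th)=\int_M\sum_{i=1}^l\psi_i\bigl(m_i(2^{-1}R^g-p_m^{-1}r^g)-R^g_i\bigr)\,d\mu^g,
\]
so $g$ is critical along $ [\![ g ]\!] $ if and only if
\[
R^g_i=m_i\bigl(2^{-1}R^g-p_m^{-1}r^g\bigr)\qquad\text{for every }i\in\{1,\dots,l\}.
\]
Summing this over $i$ and using $\sum_im_i=m$ and $\sum_iR^g_i=R^g$ from \eqref{eq:RIsSimplyTheSum} gives $R^g=m(2^{-1}R^g-p_m^{-1}r^g)$; since $p_m=2m/(m-2)$, this forces $R^g\equiv r^g$, an honest constant. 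Feeding this back and using $2^{-1}-p_m^{-1}=m^{-1}$ yields $R^g_i/m_i=r^g/m$ for all $i$, which is the asserted condition with $c=r^g/m$. Conversely, if $R^g_i/m_i=c$ for some real number $c$ and all $i$, then $R^g=\sum_im_ic=mc$ is constant, $r^g=mc$, and both sides of the bracket above equal $m_ic$, so the first variation vanishes along $ [\![ g ]\!] $.

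For (2), note that $ [\![ g ]\!] \subset\Met_{\perp}$ by \eqref{eq:FundamentalInclusion}, so a metric $g$ critical along $\Met_{\perp}$ is in particular critical along $ [\![ g ]\!] $; by part (1) its scalar curvature is the constant $r^g$, which I write as $mc$ with $c=r^g/m$, so the simplified first variation formula \eqref{eq:FirstVariationFormulaEinsteinHilbert} applies. Substituting a block-diagonal variation $h=h_1\oplus\cdots\oplus h_l$ and using $\langle h_i,\Ric^g\rangle=\langle h_i,\Ric^g_i\rangle$, $\langle h_i,g\rangle=\langle h_i,g_i\rangle$, and $R^g/m=c$ gives
\[
\left.\frac{d}{dt}\right|_{t=0}E(g+th)=-\int_M\sum_{i=1}^l\langle h_i,\Ric^g_i-cg_i\rangle\,d\mu^g,
\]
which vanishes for all $h_i\in\Gamma(\Sym^2E_i^*)$ if and only if $\Ric^g_i=cg_i$ for every $i$. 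Conversely, if $\Ric^g_i=cg_i$ for a constant $c$ and all $i$, then $R^g_i=cm_i$, hence $R^g=cm$ is constant, \eqref{eq:FirstVariationFormulaEinsteinHilbert} applies, and the last integral vanishes identically. Apart from these substitutions the computation is routine; the one genuine point is the summation step in (1), where the identity $p_m=2m/(m-2)$ promotes the a priori merely function-valued quantity $2^{-1}R^g-p_m^{-1}r^g$ to an actual constant, which is also the mechanism by which multiconformal criticality forces constant scalar curvature. In (2) the only care needed is to identify the tangent space of $\Met_{\perp}$ correctly and to observe that only the diagonal blocks $\Ric^g_i$ of $\Ric^g$ contribute to the variation.
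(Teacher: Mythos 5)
Your proof is correct and follows essentially the same route as the paper: identify the tangent spaces to $ [\![ g ]\!]$ and $\Met_{\perp}$ at $g$, plug block-diagonal variations into the first variation of $E$, and read off the Euler--Lagrange conditions $R^g_i/m_i=R^g/m$ and $\Ric^g_i=(R^g/m)g_i$. The only cosmetic difference is that the paper invokes the inclusion $[g]\subset [\![ g ]\!]\subset\Met_{\perp}$ together with \eqref{eq:RIsSimplyTheSum} to assume constant scalar curvature from the outset and work with \eqref{eq:FirstVariationFormulaEinsteinHilbert} throughout, whereas you derive the constancy of $R^g$ in (1) directly from the general variation formula by summing over $i$; both handlings are fine.
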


\begin{proof}
A section $h$ of $\Sym^2TM^*$ is tangent to $\Met_{\perp}$ if and only if $h$ can be written as the sum $h=h_1\oplus \cdots \oplus  h_l$ of sections $h_i$ of $\Sym^2E_i^*$, $i\in\{1, \dots, l\}$. 
Also, $h$ is tangent to $ [\![ g ]\!]$ if and only if $h=\varphi_1g_1\oplus \cdots\oplus \varphi_lg_l$ for some $\varphi_i\in C^{\infty}(M)$, $i\in\{1, \dots, l\}$. 
By \eqref{eq:FundamentalInclusion} and \eqref{eq:RIsSimplyTheSum}, we may assume without loss of generality that $g$ has constant scalar curvature. 

We show (1). 
Assume $g$ is critical for $\left.E\right|_{ [\![ g ]\!]}$. 
Then, for each $i\in\{1, \dots, l\}$, 
\begin{align*}
0
&=\left.\frac{d}{dt}\right|_{t=0}E(g+t\varphi_ig_i)\\
&=-\int_M\langle \varphi_ig_i, \Ric^g-(R^g/m)g\rangle d\mu^g
=-\int_M\varphi_i\left(R^g_i-(R^g/m)m_i\right)d\mu^g
\end{align*}
for all $\varphi_i\in C^{\infty}(M)$ by \eqref{eq:FirstVariationFormulaEinsteinHilbert}, whence $R^g_i/m_i=R^g/m$. 
Conversely, assume $R^g_1/m_1=\cdots=R^g_l/m_l=c$. 
Then $R^g_i/m_i=R^g/m$ necessarily holds by \eqref{eq:RIsSimplyTheSum}, and 
\begin{align*}
\left.\frac{d}{dt}\right|_{t=0}E\left(g+t(\varphi_1g_1\oplus \cdots\oplus \varphi_lg_l)\right)
&=-\int_M\langle \varphi_1g_1\oplus \cdots\oplus \varphi_lg_l, \Ric^g-(R^g/m)g\rangle d\mu^g\\
&=-\sum_{i=1}^l\int_Mm_i\varphi_i(R^g_i/m_i-R^g/m)d\mu^g=0
\end{align*}
for all $\varphi_1, \dots, \varphi_l\in C^{\infty}(M)$ by \eqref{eq:FirstVariationFormulaEinsteinHilbert}. 
That is, $g$ is critical for $\left.E\right|_{ [\![ g ]\!]}$. 

We show (2). 
Assume $g$ is critical for $\left.E\right|_{\Met_{\perp}}$. 
Then, for each $i\in\{1, \dots, l\}$, 
\begin{align*}
0=\left.\frac{d}{dt}\right|_{t=0}E(g+th_i)=-\int_M\langle h_i, \Ric^g-(R^g/m)g\rangle d\mu^g
\end{align*}
for all $h_i\in\Gamma(\Sym^2E_i)$ by \eqref{eq:FirstVariationFormulaEinsteinHilbert}, whence $0=\Ric^g_i-(R^g/m)g_i$. 
Conversely, assume $\Ric^g_i=cg_i$ for all $i\in\{1, \dots, l\}$. Then $c=R^g_1/m_1=\cdots=R^g_l/m_l$, so $c=R^g/m$ by \eqref{eq:RIsSimplyTheSum}. Therefore, 
\begin{align*}
\left.\frac{d}{dt}\right|_{t=0}E\left(g+t(h_1\oplus \cdots\oplus  h_l)\right)
&=-\int_M\langle h_1\oplus \cdots\oplus  h_l, \Ric^g-(R^g/m)g\rangle d\mu^g\\
&=-\sum_{i=1}^l\int_M\langle h_i, \Ric^g_i-(R^g/m)g_i\rangle d\mu^g
=0
\end{align*}
for all $h_i\in\Gamma(\Sym^2E_i)$, $i\in\{1, \dots, l\}$. 
That is, $g$ is critical for $\left.E\right|_{\Met_{\perp}}$.  
\end{proof}

It is well known that, provided $m\ge 3$, 
\begin{align*}
&\inf_{g\in\Met}E(g)=-\infty, 
\qquad
\sup_{\tilde{g}\in[g]}E(\tilde{g})=\infty, \\
&-\infty<\inf_{\tilde{g}\in[g]}E(\tilde{g})\le m(m-1)\Vol(S^m(1))^{2/m}=\mu(S^m(1)).  
\end{align*}
Recall that the conformal Yamabe constant $\mu(M, [g])$ and the differential Yamabe invariant $\sigma(M)$, also known as Schoen's $\sigma$-constant, are defined respectively by 
\begin{align*}
&\mu(M, [g])=\inf_{\tilde{g}\in[g]}E(\tilde{g}), 
&
&\sigma(M)=\sup_{[g]\subset\Met}\mu(M, [g]). 
\intertext{In view of \eqref{eq:FundamentalInclusion}, we define}
&\sigma(M,  [\![ g ]\!])=\sup_{[\tilde{g}]\subset [\![ g ]\!]}\mu(M, [\tilde{g}]), 
&
&\sigma_{\perp}(M)=\sup_{[g]\subset\Met_{\perp}}\mu(M, [g])
\end{align*}
so that 
\begin{equation}\label{eq:FundamentalIneq}
-\infty<\mu(M, [g])\le\sigma(M,  [\![ g ]\!])\le\sigma_{\perp}(M)\le\sigma(M)\le \sigma(S^m). 
\end{equation}
We remark that $\mu(M, [g])>0$, $\sigma(M,  [\![ g ]\!])>0$, $\sigma_{\perp}(M)>0$, and $\sigma(M)>0$ hold, respectively, if and only if $[g]$, $ [\![ g ]\!]$, $\Met_{\perp}$, and $\Met$ contain a metric of positive scalar curvature. 

By definition, $\sigma_{\perp}(M)$ and $\sigma(M,  [\![ g ]\!])$ are invariants of almost product manifolds and multiconformal manifolds, respectively. 
Here, a morphism $\varphi: M\to N$ of almost product manifolds $(M, \ TM=E_1+\cdots+E_l)$ and $(N, \ TN=F_1+\cdots+F_l) $ is a smooth map such that $d\varphi(E_i)\subset d\varphi(F_i)$ for all $i\in\{1, \dots, l\}$. 
Also, two almost product Riemannian manifolds $(M, g=g_1\oplus \cdots\oplus  g_l)$ and $(N, h=h_1\oplus \cdots\oplus  h_l)$ are said to be multiconformally diffeomorphic if there exists an isomorphism $\varphi: M\to N$ of the underlying almost product manifolds so that $g$ and $\varphi^*h$ are multiconformally equivalent. 

The invariants $\sigma(M,  [\![ g ]\!])$ and $\sigma_{\perp}(M)$ have a resemblance in spirit 
to the equivariant Yamabe constant or invariant (cf.~B\'erard-Bergery \cite{Bergery_Kaigai}, Hebey--Vaugon \cite{MR1216009}) 
defined for manifolds with group actions. 
However, since an equivariant conformal class is smaller than the ordinary conformal class, 
one cannot expect an inequality like \eqref{eq:FundamentalIneq} for the equivariant ones (cf.~Ammann--Madani--Pilca \cite[Example 3]{MR3712199}). 

\section{Change of dependent variables}\label{sect:KillFirst}
Let $(M, g=\langle\cdot, \cdot\rangle)$ be a Riemannian manifold, $E$ a vector subbundle of $TM$, and $\mathcal{P}\in\Gamma(\End TM)$ the corresponding orthogonal projection. 
For $f\in C^{\infty}(M)$, 
we define $d_E f\in\Gamma(TM^*)$, $\grad^g_Ef\in\Gamma(TM)$, 
$\Hess^g_Ef\in\Gamma(\Sym^2TM^*)$, $\Delta^g_Ef\in C^{\infty}(M)$ by 
\begin{align}\label{eq:DifferentialOperators}\begin{split}
&d_Ef(X)=df(\mathcal{P}X),  \quad \grad^g_Ef=\mathcal{P}\grad^g f, \\
&\Hess^g_Ef(X, Y)=\Hess^gf(\mathcal{P}X, \mathcal{P}Y),  \quad \Delta^g_Ef =\tr^g\Hess^g_E
\end{split}\end{align}
for all $X, Y\in\Gamma(TM)$. 
Our sign conventions for $\Hess^g$ and $\Delta^g$ are the ones such that $\Hess f=f''dt\otimes dt$ and $\Delta f=f''$ on $(\real, dt^2)$. 
Note that the chain rules 
\begin{align*}
d_E(\tau\circ f)&=(\tau'\circ f)d_Ef, \\
\grad_E(\tau\circ f)&=(\tau'\circ f)\grad_E f, \\
\Hess^g_E(\tau\circ f)&=(\tau'\circ f)\Hess^g_Ef+(\tau''\circ f)d_Ef\otimes d_Ef, \\
\Delta^g_E(\tau\circ f)&=(\tau'\circ f)\Delta^g_Ef+(\tau''\circ f)\lvert d_Ef\rvert^2
\end{align*}
hold for all smooth $\tau: \real\to\real$, $f: M\to\real$ and that 
\begin{align*}
\langle d_Eu, d_Ev\rangle=\langle\grad^g_Eu, \grad^g_Ev\rangle
\end{align*}
for all $u, v\in C^{\infty}(M)$. 

Let $f_1, \dots, f_l : M\to\real$ be smooth functions and $a_j$, $b_{jk}$ real numbers such that $b_{jk}=b_{kj}$ for all $j, k\in \{1, \dots, l\}$, and consider the section
\begin{align*}
{\color{orange}\sum_{j=1}^la_j\frac{\Hess^g_E f_j}{f_j}+\sum_{j, k=1}^lb_{jk}\frac{d_Ef_j\otimes d_Ef_k}{f_j f_k}}
\end{align*}
of $\Sym^2TM^*$. 
We introduce $u_j:=\log f_j$ so that 
\begin{align*}
&{\color{orange}\sum_{j=1}^la_j\frac{\Hess^g_E f_j}{f_j}+\sum_{j, k=1}^lb_{jk}\frac{d_Ef_j\otimes d_Ef_k}{f_j f_k}}\\
&={\color{magenta}\sum_{j=1}^la_j\Hess^g_Eu_j
	+\sum_{j=1}^l(a_j+b_{jj})d_Eu_j\otimes d_Eu_j
		+\sum_{j\neq k}b_{jk}d_Eu_j\otimes d_Eu_k}.  
\end{align*}
We set 
\begin{align}\label{eq:matricesLatin}\begin{split}
\boldsymbol{a}&:=
	\begin{pmatrix}
	a_1 & \cdots & a_l
	\end{pmatrix}^T, 
\\
A&:=
	\begin{pmatrix}
	a_1&&\\
	&\ddots&\\
	&&a_l
	\end{pmatrix}, 
\quad B:=(b_{jk})=
	\begin{pmatrix}
	b_{11}&\cdots&b_{1l}\\
	\vdots&\ddots&\vdots\\
	b_{l1}&\cdots&b_{ll}
	\end{pmatrix}, 
\end{split}\end{align}
take an orthogonal matrix $P=(p_{\alpha j})$, and define
\begin{align}
\label{eq:matricesGreek}\boldsymbol{\kappa}&:=
	\begin{pmatrix}
	\kappa_1 & \cdots & \kappa_l
	\end{pmatrix}^T
=P\boldsymbol{a}, 
& & &
\Lambda&:=
(\lambda_{\alpha\beta})
=P(A+B)P^{-1}. 
\end{align}
We introduce $\psi_{\alpha}:=\sum_{j=1}^lp_{\alpha j}u_j$ for all $\alpha\in\{1, \dots, l\}$ so that $u_j=\sum_{\alpha=1}^lp_{\alpha j}\psi_{\alpha}$ and 
\begin{align*}
&{\color{magenta}\sum_{j=1}^la_j\Hess^g_Eu_j
	+\sum_{j=1}^l(a_j+b_{jj})d_Eu_j\otimes d_Eu_j
		+\sum_{j\neq k}b_{jk}d_Eu_j\otimes d_Eu_k}\\
&=\sum_{j=1}^la_j\Hess^g_Eu_j
	+\sum_{j, k=1}^l(A+B)_{jk}d_Eu_j\otimes d_Eu_k\\
&=\sum_{j, \alpha=1}^la_jp_{\alpha j}\Hess^g_E\psi_{\alpha}
	+\sum_{j, k, \alpha, \beta=1}^l(A+B)_{jk}p_{\alpha j}p_{\beta k}d_E\psi_{\alpha}\otimes d_E\psi_{\beta}\\
&={\color{blue}\sum_{\alpha=1}^l\kappa_{\alpha}\Hess^g_E\psi_{\alpha}+\sum_{\alpha, \beta=1}^l\lambda_{\alpha\beta}d_E\psi_{\alpha}\otimes d_E\psi_{\beta}}.  
\end{align*}
Taking the trace of both sides, we obtain the following formula for the change of dependent variables. 
\begin{lemma}\label{lem:ChangeOfVariables}
If  $\psi_{\alpha}=\sum_{j=1}^lp_{\alpha j}\log f_j$, then 
\begin{align*}
{\color{orange}\sum_{j=1}^la_j\frac{\Delta^g_E f_j}{f_j}+\sum_{j, k=1}^lb_{jk}\frac{\langle d_Ef_j, d_Ef_k\rangle}{f_j f_k}}
&={\color{blue}\sum_{\alpha=1}^l\kappa_{\alpha}\Delta^g_E\psi_{\alpha}+\sum_{\alpha, \beta=1}^l\lambda_{\alpha\beta}\langle d_E\psi_{\alpha}, d_E\psi_{\beta}\rangle}. 
\end{align*}
Here, $B=(b_{jk})$ is symmetric, $P=(p_{\alpha j})$ is orthogonal, and the coefficients satisfy the relations \eqref{eq:matricesLatin}, \eqref{eq:matricesGreek}. 
\end{lemma}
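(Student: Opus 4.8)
The plan is to reduce the statement to the tensor identity already assembled in the lines preceding it, and then take a $g$-trace; the argument is essentially bookkeeping. First I would pass from the multiplicative variables $f_j$ to the additive variables $u_j := \log f_j$. Applying the chain rules for $\Hess^g_E$ and $d_E$ from \eqref{eq:DifferentialOperators}, with $\tau = \exp$, gives $\Hess^g_E f_j/f_j = \Hess^g_E u_j + d_E u_j\otimes d_E u_j$ and $d_E f_j/f_j = d_E u_j$. Substituting these into the left-hand tensor and using $b_{jk}=b_{kj}$ to fold the terms $\sum_j a_j\, d_E u_j\otimes d_E u_j$ and $\sum_{j,k} b_{jk}\, d_E u_j\otimes d_E u_k$ into a single sum with the symmetric matrix $A+B$ of \eqref{eq:matricesLatin}, one obtains
\[
\sum_{j=1}^l a_j \frac{\Hess^g_E f_j}{f_j} + \sum_{j,k=1}^l b_{jk}\frac{d_E f_j\otimes d_E f_k}{f_j f_k} = \sum_{j=1}^l a_j\Hess^g_E u_j + \sum_{j,k=1}^l (A+B)_{jk}\, d_E u_j\otimes d_E u_k .
\]

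Next I would perform the orthogonal change of variables $\psi_\alpha := \sum_j p_{\alpha j} u_j$. Orthogonality of $P=(p_{\alpha j})$ (so that $P^{-1}=P^T$) gives the inverse relation $u_j = \sum_\alpha p_{\alpha j}\psi_\alpha$, and since $\Hess^g_E$ and $d_E$ are $\real$-linear in their argument, $\Hess^g_E u_j = \sum_\alpha p_{\alpha j}\Hess^g_E\psi_\alpha$ and $d_E u_j = \sum_\alpha p_{\alpha j}d_E\psi_\alpha$. Regrouping, the coefficient of $\Hess^g_E\psi_\alpha$ becomes $\sum_j a_j p_{\alpha j} = (P\boldsymbol a)_\alpha = \kappa_\alpha$ and the coefficient of $d_E\psi_\alpha\otimes d_E\psi_\beta$ becomes $\sum_{j,k}(A+B)_{jk}p_{\alpha j}p_{\beta k} = (P(A+B)P^{-1})_{\alpha\beta} = \lambda_{\alpha\beta}$, exactly the matrices of \eqref{eq:matricesGreek}; this is the identity of sections of $\Sym^2 TM^*$ displayed just above the statement.

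Finally I would take the $g$-trace of both sides. By the definitions in \eqref{eq:DifferentialOperators}, $\tr^g\Hess^g_E\psi = \Delta^g_E\psi$, and from the identity $\langle d_E u, d_E v\rangle = \langle\grad^g_E u,\grad^g_E v\rangle$ recorded after \eqref{eq:DifferentialOperators} one gets $\tr^g(d_E\psi_\alpha\otimes d_E\psi_\beta) = \langle d_E\psi_\alpha, d_E\psi_\beta\rangle$; since $\tr^g$ is linear, the claimed formula follows. I do not expect any genuine obstacle: the only points meriting a moment's care are that $P^{-1}=P^T$ is precisely what makes $\psi_\alpha = \sum_j p_{\alpha j} u_j$ and $u_j = \sum_\alpha p_{\alpha j}\psi_\alpha$ mutually inverse, and that folding the quadratic terms into the symmetric matrix $A+B$ is harmless because $B$ — hence $A+B$, hence $P(A+B)P^{-1}$ — is symmetric, so no information is lost in the symmetrization.
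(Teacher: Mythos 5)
Your proposal is correct and follows exactly the paper's own route: pass to $u_j=\log f_j$ via the chain rule, fold the quadratic terms into the symmetric matrix $A+B$, apply the orthogonal substitution $\psi_\alpha=\sum_j p_{\alpha j}u_j$ to identify the coefficients $\kappa_\alpha=(P\boldsymbol a)_\alpha$ and $\lambda_{\alpha\beta}=(P(A+B)P^{-1})_{\alpha\beta}$, and then take the $g$-trace of the resulting identity in $\Sym^2TM^*$. No gaps; this is essentially the same argument the paper gives in the lines preceding the lemma.
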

\noindent
In particular, when $B=(b_{jk})$ is negative (resp.~positive) definite, $\sum_{j, k=1}^lb_{jk}\frac{\langle d_Ef_j, d_Ef_k\rangle}{f_j f_k}$ is nonpositive (resp.~nonnegative) and is zero if and only if $d_Ef_1=\cdots=d_Ef_l=0\in\Gamma(TM^*)$.  

\section{Scalar curvature computation \`a la Karcher}\label{sect:Karcher}

Let $M^m=M_1^{m_1}\times\cdots\times M_l^{m_l}$ be a direct product of smooth manifolds. 
We denote by $E_i\to M$ the vector subbundle of $TM$ defined as the pullback of $TM_i$ via the projection $M\to M_i$, 
so that $TM=E_1+\cdots+E_l$ and $E_i\cap E_j$ is trivial if $i\neq j$ and $\mathcal{P}_i: TM \rightarrow E_i$ are the corresponding projections.
We adopt the notation and terminology from Section~\ref{sect:EinsteinHilbertRestrictions} in this special case. 
Following \eqref{eq:DifferentialOperators}, we define $\grad^g_i:=\grad^g_{E_i}$, $\Hess^g_i:=\Hess^g_{E_i}$, $\Delta^g_i:=\Delta^g_{E_i}$ for an arbitrary compatible metric $g$, so that 
\begin{align*}
\grad^g=\sum_{i=1}^l\grad^g_i, &&\Delta^g=\sum_{i=1}^l\Delta^g_i. 
\end{align*}

In this section, we derive the formula for the scalar curvature under a multiconformal change (Theorem \ref{thm:MulticonformalDeformation}). 
The strategy is the same as that of Karcher \cite{MR1669359}. Hence, we fix a compatible $g = g_1 \oplus \ldots \oplus g_l$ on $M= M_1 \times \ldots \times M_l$ and a multiconformal change $\tilde{g} = f_1^2 g_1 \oplus \ldots \oplus f_l^2 g_l$.
In what follows, we write $\tilde{g}=\langle\!\langle\cdot, \cdot\rangle\!\rangle$, $g=\langle\cdot, \cdot\rangle$ whenever convenient, with the respective norms $|\!|\cdot|\!|$, $|\cdot|$. In the proof, we also compute the differences $\Ric^{\tilde{g}}_i-\Ric^g_i$, see Eq.~\eqref{eq:Ricj}, and $R^{\tilde{g}}_i-R^g_i/f_i^2$, see Eq.~\eqref{eq:Rj}. 

\begin{remark}
When $g$ is a direct product metric, 
the multiconformally related metric $\tilde{g}$ is also called a \emph{twisted product}\footnote{
	cf.~Koike \cite[p.~3]{MR1212293}, Meumertzheim--Reckziegel--Schaaf \cite[Definition 2]{MR1726218}.  
}. If in addition $l=2$, it is more common to say that $\tilde{g}$ is \emph{biconformal}\footnote{
	cf.~Mo \cite[p.~15]{MR1735681}, 
	Slobodeanu \cite{MR2796655,MR2156024}, 
	Ou \cite[Lemma 2.1]{MR2171890}, 
	Danielo \cite[D\'efinition 2.1]{MR2246414}, 
	Rovenski--Zelenko \cite[p.~504]{MR3401903}.  
} to $g$. 
Moreover, for a direct product metric $g$, a formula without proof for the curvature tensor of $\tilde{g}$ can be found in 
Meumertzheim--Reckziegel--Schaaf \cite[Proposition 1]{MR1726218}. 
We present here the detailed computation, which works for an arbitrary compatible metric $g$ on $M=M_1\times\cdots\times M_l$.  
\end{remark}

Before we start computing the scalar curvature under a multiconformal change, we first relate the Levi-Civita connections $\nabla^g$, $\nabla^{\tilde{g}}$ of $g$, $\tilde{g}$, respectively. As the derivatives of the multiconformal factors $f_1, \ldots, f_l$ will be involved it is reasonable to first compare the gradients taken with respect to the metrics $g$ and $\tilde{g}$.

\begin{lemma}
For every $\varphi\in C^{\infty}(M)$, 
\begin{align}\label{eq:GradTilde}
{\color{blue}\grad^{\tilde{g}}\varphi}={\color{orange}\sum_{a=1}^lf_a^{-2}\grad^g_a\varphi}. 
\end{align}
\end{lemma}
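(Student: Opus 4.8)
The plan is to verify that the right-hand side of \eqref{eq:GradTilde} satisfies the defining property of the $\tilde{g}$-gradient and then to conclude by uniqueness. Concretely, writing $Y:=\sum_{a=1}^l f_a^{-2}\grad^g_a\varphi$, I would show $\langle\!\langle Y, X\rangle\!\rangle = d\varphi(X)$ for every $X\in\Gamma(TM)$; since the vector field with this property is unique, this identifies $Y$ with $\grad^{\tilde{g}}\varphi$.

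Before expanding I would isolate the two structural facts that do all the work. First, both $g$ and $\tilde{g}$ are compatible with $TM=E_1+\cdots+E_l$, so $E_a\perp E_b$ with respect to either metric when $a\neq b$, and on $E_a$ one has $\tilde{g}=f_a^2\,g$. Second, $\grad^g_a\varphi=\mathcal{P}_a\grad^g\varphi$ is a section of $E_a$, and $\mathcal{P}_a$ is the $g$-orthogonal projection, hence $g$-self-adjoint and idempotent. With these in hand the computation is routine: decompose $X=\sum_b\mathcal{P}_bX$, expand $\langle\!\langle Y,X\rangle\!\rangle=\sum_{a,b}\langle\!\langle f_a^{-2}\grad^g_a\varphi,\mathcal{P}_bX\rangle\!\rangle$, drop the off-diagonal terms by orthogonality, replace $\tilde{g}$ by $f_a^2 g$ on each $E_a$ so that the factors $f_a^{\pm 2}$ cancel, use self-adjointness and idempotency of $\mathcal{P}_a$ to rewrite $\langle\mathcal{P}_a\grad^g\varphi,\mathcal{P}_aX\rangle=\langle\grad^g\varphi,\mathcal{P}_aX\rangle$, and sum over $a$ to obtain $\langle\grad^g\varphi,X\rangle=d\varphi(X)$.

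There is essentially no obstacle here: the statement is a pointwise linear-algebra identity, and the only thing demanding care is the bookkeeping — keeping $g$ and $\tilde{g}$ distinct on the individual subbundles, remembering that $\grad^g_a$ is already a projected gradient so that re-applying $\mathcal{P}_a$ costs nothing, and noting that the weight $f_a^{-2}$ is exactly what compensates the $f_a^2$ produced when passing from $\tilde{g}$ to $g$ on $E_a$. If one prefers not to guess $Y$ in advance, the same argument can be phrased by decomposing $\grad^{\tilde{g}}\varphi=\sum_a\mathcal{P}_a\grad^{\tilde{g}}\varphi$ and testing $\mathcal{P}_a\grad^{\tilde{g}}\varphi$ against sections of $E_a$, which forces $\mathcal{P}_a\grad^{\tilde{g}}\varphi=f_a^{-2}\grad^g_a\varphi$ component by component.
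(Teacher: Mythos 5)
Your proposal is correct and follows essentially the same route as the paper: both verify that $\sum_a f_a^{-2}\grad^g_a\varphi$ represents the differential $d\varphi$ with respect to $\tilde{g}$ — the paper by writing $X(\varphi)=\langle\grad^g\varphi,X\rangle=\sum_a\langle\!\langle f_a^{-2}\mathcal{P}_a\grad^g\varphi,X\rangle\!\rangle$, you by the equivalent projection-and-cancellation computation — and both conclude by uniqueness of the metric representative. The extra bookkeeping you spell out (orthogonality of the $E_a$, self-adjointness and idempotency of $\mathcal{P}_a$) is exactly what the paper's one-line conversion implicitly uses, so there is no substantive difference.
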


\begin{proof}
For every $X\in\Gamma(TM)$, we can express the derivative $X(\varphi)$ either with respect to $g$ or with respect to $\tilde{g}$. This leads to,
\begin{align*}
X(\varphi) &= \langle\!\langle{\color{blue}\grad^{\tilde{g}}\varphi}, X\rangle\!\rangle, \\
X(\varphi) &= \langle\grad^g\varphi, X\rangle =\sum_{a=1}^l\langle\!\langle {\color{orange}f_a^{-2}\mathcal{P}_a\cdot\grad^g\varphi}, X\rangle\!\rangle.  
\end{align*}
As $X(\varphi)$ is independent of the Riemannian metric, \eqref{eq:GradTilde} holds.  
\end{proof}

\begin{proposition}\label{prop:DifferenceConnection}
Define $T_XY=\nabla^{\tilde{g}}_XY-\nabla^{g}_XY$ for $X, Y\in\Gamma(TM)$.  Then
\begin{align}\label{eq:DifferenceConnection}\begin{split}
T_XY
&=\sum_{a=1}^l \langle X, \grad^g f_a\rangle\frac{1}{f_a}\mathcal{P}_a Y
+\sum_{a=1}^l \langle Y, \grad^g f_a\rangle\frac{1}{f_a}\mathcal{P}_a X\\
&\quad-\sum_{a, b=1}^l\langle \mathcal{P}_b X, \mathcal{P}_b Y\rangle \frac{f_b}{f_a^2}\mathcal{P}_a\grad^g f_b.  
\end{split}\end{align}
\end{proposition}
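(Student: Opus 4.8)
The plan is to compute the difference tensor $T_XY=\nabla^{\tilde g}_XY-\nabla^g_XY$ directly from the Koszul formula for each of the two Levi-Civita connections. Since $T$ is tensorial, symmetric in $X,Y$ (both connections being torsion-free), and $C^\infty(M)$-bilinear, it suffices to verify \eqref{eq:DifferenceConnection} when $X,Y,Z$ range over a convenient local frame; but in fact the cleanest route is to keep $X,Y,Z\in\Gamma(TM)$ arbitrary and compare $2\langle\!\langle\nabla^{\tilde g}_XY,Z\rangle\!\rangle$ with $2f_c^2\langle\nabla^g_XY,\mathcal P_cZ\rangle$ summed appropriately. Concretely, I would test \eqref{eq:DifferenceConnection} against an arbitrary $Z$ using $\tilde g$: the claim is equivalent to
\[
\langle\!\langle T_XY,Z\rangle\!\rangle=\sum_{a=1}^l\Big(\langle X,\grad^g f_a\rangle\,\langle Y,\mathcal P_aZ\rangle+\langle Y,\grad^g f_a\rangle\,\langle X,\mathcal P_aZ\rangle\Big)f_a-\sum_{a,b=1}^l\langle\mathcal P_bX,\mathcal P_bY\rangle\,\frac{f_b}{1}\langle\grad^g f_b,\mathcal P_aZ\rangle,
\]
after using $\langle\!\langle f_a^{-1}\mathcal P_aY,Z\rangle\!\rangle=f_a\langle\mathcal P_aY,Z\rangle$ and $\langle\!\langle f_a^{-2}\mathcal P_a\grad^gf_b,Z\rangle\!\rangle=\langle\mathcal P_a\grad^gf_b,Z\rangle$.

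**Carrying out the Koszul comparison.**
First I would write the Koszul formula for $\tilde g$:
\[
2\langle\!\langle\nabla^{\tilde g}_XY,Z\rangle\!\rangle=X\langle\!\langle Y,Z\rangle\!\rangle+Y\langle\!\langle X,Z\rangle\!\rangle-Z\langle\!\langle X,Y\rangle\!\rangle+\langle\!\langle[X,Y],Z\rangle\!\rangle-\langle\!\langle[X,Z],Y\rangle\!\rangle-\langle\!\langle[Y,Z],X\rangle\!\rangle,
\]
and the analogous one for $g$. The three bracket terms are metric-dependent only through the inner product, so using $\langle\!\langle U,V\rangle\!\rangle=\sum_c f_c^2\langle\mathcal P_cU,\mathcal P_cV\rangle$ and expanding each derivative term by the Leibniz rule produces, besides $\sum_c f_c^2$ times the corresponding $g$-Koszul expression, exactly the extra terms involving $X(f_c^2)=2f_cX(f_c)=2f_c\langle X,\grad^gf_c\rangle$ and its cyclic variants. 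Collecting: the three derivative terms contribute $2f_c\langle X,\grad^gf_c\rangle\langle\mathcal P_cY,\mathcal P_cZ\rangle+2f_c\langle Y,\grad^gf_c\rangle\langle\mathcal P_cX,\mathcal P_cZ\rangle-2f_c\langle Z,\grad^gf_c\rangle\langle\mathcal P_cX,\mathcal P_cY\rangle$, summed over $c$. The bracket terms contribute no extra pieces beyond $\sum_c f_c^2$ times the $g$-brackets, since $[\cdot,\cdot]$ does not see the metric. Dividing through by $2$ and comparing with $f_c^2\langle\nabla^g_XY,\mathcal P_cZ\rangle$ summed over $c$ isolates $\langle\!\langle T_XY,Z\rangle\!\rangle$ as precisely the sum of those three extra families of terms, which rearranges (relabeling the free index on $Z$) into the right-hand side displayed above, hence into \eqref{eq:DifferenceConnection}.

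**The main obstacle and how to handle it.**
The only real subtlety is bookkeeping: one must be careful that $\mathcal P_c$ is $g$-orthogonal (so $\langle\mathcal P_cU,V\rangle=\langle U,\mathcal P_cV\rangle=\langle\mathcal P_cU,\mathcal P_cV\rangle$), that $\grad^gf_a$ need not lie in any single $E_c$ (this is what makes the third term a genuine double sum over $a,b$), and that the term $-Z\langle\!\langle X,Y\rangle\!\rangle$ after Leibniz yields $-\sum_c 2f_c\langle Z,\grad^gf_c\rangle\langle\mathcal P_cX,\mathcal P_cY\rangle=-\sum_c 2\langle\mathcal P_cX,\mathcal P_cY\rangle f_c\langle\grad^gf_c,Z\rangle$, and then pairing with an arbitrary test vector $Z$ via $\tilde g$ forces the $f_b/f_a^2$ weighting once one writes $\grad^gf_b=\sum_a\mathcal P_a\grad^gf_b$. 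I expect no genuine difficulty beyond this index management; once the three extra Koszul terms are correctly matched to the three terms in \eqref{eq:DifferenceConnection}, the proof is complete.
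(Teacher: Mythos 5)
Your overall route (comparing the two Koszul formulas and isolating the terms coming from differentiating the conformal factors) is the same as the paper's, and your final matching step is fine: the three ``extra'' families of terms, tested against $Z$ with $\tilde g$, do reproduce the right-hand side of \eqref{eq:DifferenceConnection}. The genuine gap is in the middle, where you assert that after the Leibniz expansion what remains is ``$\sum_c f_c^2$ times the corresponding $g$-Koszul expression'' and hence equals $2\langle\!\langle\nabla^g_XY,Z\rangle\!\rangle$. Because you expand $\langle\!\langle U,V\rangle\!\rangle=\sum_c f_c^2\langle\mathcal P_cU,\mathcal P_cV\rangle$ with $X,Y,Z$ arbitrary, the projections sit \emph{inside} the derivatives, so what actually remains is $\sum_c f_c^2 K_c(X,Y;Z)$, where $K_c$ is the Koszul expression built from the degenerate tensor $g_c(U,V)=\langle\mathcal P_cU,\mathcal P_cV\rangle$. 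For any symmetric $2$-tensor $h$ one has
\begin{equation*}
K_h(X,Y;Z)=2h(\nabla^g_XY,Z)+(\nabla^g_Xh)(Y,Z)+(\nabla^g_Yh)(X,Z)-(\nabla^g_Zh)(X,Y),
\end{equation*}
so your identification silently requires $\sum_c f_c^2\bigl[(\nabla^g_Xg_c)(Y,Z)+(\nabla^g_Yg_c)(X,Z)-(\nabla^g_Zg_c)(X,Y)\bigr]=0$, i.e.\ in effect that each $\mathcal P_c$ (equivalently each $g_c$) is $\nabla^g$-parallel. This is not index management but exactly where the geometric hypothesis on $g$ enters: in the generality in which Section~\ref{sect:Karcher} fixes $g$ (an arbitrary compatible metric), the cancellation genuinely fails. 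For instance, with $l=2$, $g=\check g_1\oplus h^2\check g_2$ a warped product with nonconstant $h$, and $f_1\equiv 1$, $f_2\equiv c\neq 1$ constant, the right-hand side of \eqref{eq:DifferenceConnection} vanishes identically, yet $T\neq 0$: for vertical $V$ the horizontal part of $\nabla_VV$ is $-\phi\,\check g_2(V,V)\grad^{\check g_1}\phi$ with $\phi=h$ for $g$ and $\phi=ch$ for $\tilde g$. So your chain of equalities closes only once each $g_c$ is parallel, e.g.\ when $g$ is the direct product metric (the case used in all later applications), and that input must be stated and used.

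By contrast, the paper's proof never lets projections enter the derivatives: using tensoriality of $T$ it restricts to lifted fields $X\in\Gamma(TM_i)$, $Y\in\Gamma(TM_j)$, $Z\in\Gamma(TM_a)$, for which $\langle\!\langle Y,Z\rangle\!\rangle=f_a^2\langle Y,Z\rangle$ and all brackets remain tangent to a single factor, and it then treats the leftover term $-\tfrac12\grad^{\tilde g}\langle\!\langle X,Y\rangle\!\rangle+\tfrac12\grad^g\langle X,Y\rangle$ separately on an adapted orthonormal frame. If you want to keep your ``arbitrary $X,Y,Z$'' version, the repair is short but essential: observe that for the direct product metric $\nabla^g\mathcal P_c=0$, hence $K_c(X,Y;Z)=2\langle\mathcal P_c\nabla^g_XY,\mathcal P_cZ\rangle$ and the weighted sum is $2\langle\!\langle\nabla^g_XY,Z\rangle\!\rangle$; without some such justification (or the paper's restriction to lifted fields together with a correct treatment of the $\grad\langle X,Y\rangle$ term), the asserted cancellation does not hold.
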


\begin{proof}
Since $T$ is tensorial, we assume without loss of generality that $X\in\Gamma(TM_i)$, $Y\in\Gamma(TM_j)$.  
Comparing the Koszul formulas 
\begin{align*}
2\langle\!\langle\nabla^{\tilde{g}}_XY, Z\rangle\!\rangle&=X\langle\!\langle Y, Z\rangle\!\rangle+Y\langle\!\langle X, Z\rangle\!\rangle-Z\langle\!\langle X, Y\rangle\!\rangle\\
&\quad-\langle\!\langle X, [Y, Z]\rangle\!\rangle-\langle\!\langle Y, [X, Z]\rangle\!\rangle+\langle\!\langle Z, [X, Y]\rangle\!\rangle, \\
2\langle\nabla^{g}_XY, Z\rangle&=X\langle Y, Z\rangle+Y\langle X, Z\rangle-Z\langle X, Y\rangle\\
&\quad-\langle X, [Y, Z]\rangle-\langle Y, [X, Z]\rangle+\langle Z, [X, Y]\rangle
\end{align*}
for $\tilde{g}$ and $g$, we observe that if $Z\in\Gamma(TM_a)$ then
\begin{align*}
2f_a^2\langle\nabla^{\tilde{g}}_XY, Z\rangle
&=2\langle\!\langle\nabla^{\tilde{g}}_XY, Z\rangle\!\rangle\\
&=X\left( f_a^2\langle Y, Z\rangle\right)
+Y\left( f_a^2\langle X, Z\rangle\right)
-Z\langle\!\langle X, Y\rangle\!\rangle\\
&\quad-f_a^2\langle X, [Y, Z]\rangle
-f_a^2\langle Y, [X, Z]\rangle
+f_a^2\langle Z, [X, Y]\rangle\\
&=2f_aX (f_a)\langle Y, Z\rangle+2f_aY( f_a)\langle X, Z\rangle-Z\langle\!\langle X, Y\rangle\!\rangle\\
&{\color{blue}\quad+f_a^2X\langle Y, Z\rangle
+f_a^2Y \langle X, Z\rangle}\\
&{\color{blue}\quad-f_a^2Z \langle X, Y\rangle}+f_a^2Z \langle X, Y\rangle&&(=0)\\
& {\color{blue}\quad-f_a^2\langle X, [Y, Z]\rangle
-f_a^2\langle Y, [X, Z]\rangle
+f_a^2\langle Z, [X, Y]\rangle}\\
&={\color{blue}2f_a^2\langle\nabla^{g}_XY, Z\rangle}\\
&\quad+2f_aX(f_a)\langle Y, Z\rangle+2f_aY(f_a)\langle X, Z\rangle\\
&\quad-Z\langle\!\langle X, Y\rangle\!\rangle+f_a^2Z \langle X, Y\rangle.  
\end{align*}
Dividing both sides by $2f_a^2$, we obtain
\begin{align*}
\langle\nabla^{\tilde{g}}_XY-\nabla^{g}_XY, Z\rangle
&=\frac{X(f_a)}{f_a}\langle Y, Z\rangle+\frac{Y(f_a)}{f_a}\langle X, Z\rangle
-\frac{1}{2}\frac{Z}{f_a^2}\langle\!\langle X, Y\rangle\!\rangle+\frac{1}{2}Z \langle X, Y\rangle.  
\end{align*}
Hence,
\begin{align*}
T_XY
&=\nabla^{\tilde{g}}_XY-\nabla^g_XY\\
&=\sum_{a=1}^l \frac{X(f_a)}{f_a}\mathcal{P}_a Y
+\sum_{a=1}^l \frac{Y(f_a)}{f_a}\mathcal{P}_a X
-\frac{1}{2}\grad^{\tilde{g}}\langle\!\langle X, Y\rangle\!\rangle+\frac{1}{2}\grad^g\langle X, Y\rangle\\
&={\color{orange}\sum_{a=1}^l \langle X, \grad^g f_a\rangle\frac{1}{f_a}\mathcal{P}_a Y
+\sum_{a=1}^l \langle Y, \grad^g f_a\rangle\frac{1}{f_a}\mathcal{P}_a X}\\
&\quad-\frac{1}{2}\grad^{\tilde{g}}\langle\!\langle X, Y\rangle\!\rangle+\frac{1}{2}\grad^g\langle X, Y\rangle.  
\end{align*}

To conclude the claimed formula it remains to show that
\begin{align}\label{eq:StrangeLookingQuantity}
-\frac{1}{2}\grad^{\tilde{g}}\langle\!\langle X, Y\rangle\!\rangle+\frac{1}{2}\grad^g\langle X, Y\rangle
=-\sum_{a, b=1}^l\langle \mathcal{P}_b X, \mathcal{P}_b Y\rangle \frac{f_b}{f_a^2}\mathcal{P}_a\grad^g f_b
\end{align}
for $X, Y\in\Gamma(TM)$. 

The term $-\frac{1}{2}\grad^{\tilde{g}}\langle\!\langle X, Y\rangle\!\rangle+\frac{1}{2}\grad^g\langle X, Y\rangle$ is tensorial, since it is  the difference of two tensors $T_XY$ and 
${\color{orange}\sum_{a=1}^l \langle X, \grad^g f_a\rangle\frac{1}{f_a}\mathcal{P}_a Y
+\sum_{a=1}^l \langle Y, \grad^g f_a\rangle\frac{1}{f_a}\mathcal{P}_a X}$. 
Thus, it suffices to check \eqref{eq:StrangeLookingQuantity} for an $g$-orthonormal frame $\{e_{\alpha}\}_{\alpha=1}^m$ 
such that for every $\alpha\in\{1, \dots, m\}$ there exists some $i\in\{1, \dots, l\}$ with $e_{\alpha}\in\Gamma(E_i)$.  
Since $\{e_{\alpha}\}_{\alpha=1}^m$ remains orthogonal with respect to $\tilde{g}$, 
\eqref{eq:StrangeLookingQuantity} holds for $X=e_{\alpha}$, $Y=e_{\beta}$ if $\alpha\neq\beta$ as both sides evaluate to $0$.  
If $\alpha=\beta$, we derive for the left hand side
\begin{align*}
-\frac{1}{2}\grad^{\tilde{g}}\langle\!\langle e_{\alpha}, e_{\alpha}\rangle\!\rangle+\frac{1}{2}\grad^g\langle e_{\alpha}, e_{\alpha}\rangle
&=-\frac{1}{2}\grad^{\tilde{g}}(f_i^2)\\
&=-\frac{1}{2}\sum_{a=1}^lf_a^{-2}\mathcal{P}_a\grad^g(f_i^2)&&\text{$\because$ \eqref{eq:GradTilde}}\\
&=-{\color{blue}\sum_{a=1}^l\frac{f_i}{f_a^2}\mathcal{P}_a\grad^g f_i},
\end{align*}
and for the right hand side
\begin{align*}
-\sum_{a, b=1}^l\langle \mathcal{P}_b e_{\alpha}, \mathcal{P}_b e_{\alpha}\rangle \frac{f_b}{f_a^2}\mathcal{P}_a\grad^g f_b
&=-{\color{blue}\sum_{a=1}^l\frac{f_i}{f_a^2}\mathcal{P}_a\grad f_i}.  
\end{align*}
Now \eqref{eq:StrangeLookingQuantity} follows by combining these two identities.  
\end{proof}

Using the tensor $T= \nabla^{\tilde{g}} - \nabla^g$ and the above lemma we are now able to derive a formula for the scalar curvature under a multiconformal change.

\begin{theorem}\label{thm:MulticonformalDeformation}
The scalar curvature of the metric $\tilde{g}$ satisfies
\begin{align}\label{eq:ScalarCurvatureDifference}\begin{split}
&R^{\tilde{g}}-\sum_{i}\frac{R_i^g}{f_i^2}\\
&=-2\sum_{i}(m_i-1)\frac{\Delta^g_if_i}{f_i^3}
	-2\sum_{i\neq j}m_j\frac{\Delta^g_if_j}{f_i^2f_j}\\
&\quad-\sum_{i}(m_i-1)(m_i-4)\frac{\lvert\grad^g_if_i\rvert^2}{f_i^4}
	-2\sum_{i\neq j}m_j(m_i-2)\frac{\langle\grad^g_i f_i, \grad^g_i f_j\rangle}{f_i^3f_j}\\
&\quad-\sum_{i\neq j}m_j(m_j-1)\frac{\lvert\grad^g_if_j\rvert^2}{f_i^2f_j^2}
	-\sum_{i\neq j, j\neq k, k\neq i}m_jm_k\frac{\langle \grad^g_i f_j, \grad^g_i f_k\rangle}{f_i^2f_jf_k}, 
\end{split}\end{align}
where the indices $i, j, k$ run over $\{1, \dots, l\}$.  
\end{theorem}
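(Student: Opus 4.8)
The plan is to compute the Riemann curvature tensor of $\tilde g$ from that of $g$ by means of the difference tensor $T=\nabla^{\tilde g}-\nabla^{g}\in\Gamma(\Sym^2TM^*\otimes TM)$ determined in Proposition~\ref{prop:DifferenceConnection}, and then to contract twice. The starting point is the general identity for two torsion-free connections differing by such a tensor,
\begin{equation*}
R^{\tilde g}(X,Y)Z=R^{g}(X,Y)Z+(\nabla^{g}_XT)(Y,Z)-(\nabla^{g}_YT)(X,Z)+T_X(T_YZ)-T_Y(T_XZ),
\end{equation*}
into which one substitutes the explicit expression \eqref{eq:DifferenceConnection}. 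To keep the bookkeeping under control I would work at an arbitrary point $p\in M$ with a $g$-orthonormal basis $\{e_\alpha\}_{\alpha=1}^m$ of $T_pM$ adapted to the decomposition, i.e.\ with each $e_\alpha\in(E_{i(\alpha)})_p$ for some index $i(\alpha)$, extended to a $g$-orthonormal frame that is $\nabla^{g}$-parallel along the $g$-geodesics issuing from $p$, so that $\nabla^{g}e_\alpha|_p=0$. Since the contraction over the first slot is metric independent, $\Ric^{\tilde g}(Y,Z)=\sum_\alpha\langle R^{\tilde g}(e_\alpha,Y)Z,e_\alpha\rangle$; replacing $Y,Z$ by $\mathcal{P}_iY,\mathcal{P}_iZ$ yields $\Ric^{\tilde g}_i$, which is the content of Eq.~\eqref{eq:Ricj}. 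For the second contraction one uses that $\{f_{i(\alpha)}^{-1}e_\alpha\}$ is $\tilde g$-orthonormal at $p$, so $R^{\tilde g}_i=\tr^{\tilde g}\Ric^{\tilde g}_i=\sum_{i(\alpha)=i}f_i^{-2}\Ric^{\tilde g}(e_\alpha,e_\alpha)$, while the same computation with $T\equiv 0$ gives $\sum_{i(\alpha)=i}f_i^{-2}\Ric^{g}(e_\alpha,e_\alpha)=R^{g}_i/f_i^2$; subtracting produces Eq.~\eqref{eq:Rj}, and summing over $i$ together with \eqref{eq:RIsSimplyTheSum} gives \eqref{eq:ScalarCurvatureDifference}.

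Carrying this out, the term $(\nabla^{g}_XT)(Y,Z)$ at $p$ reduces to $\nabla^{g}_X\bigl(T(Y,Z)\bigr)$ because the frame is parallel at $p$; differentiating the coefficient $\langle X,\grad^{g}f_a\rangle$ appearing in \eqref{eq:DifferenceConnection} produces a $\Hess^{g}f_a$, which after the double trace is exactly where the second-order terms $(m_i-1)\Delta^{g}_if_i/f_i^3$ and $m_j\Delta^{g}_if_j/(f_i^2f_j)$ come from. The remaining first-order part of $(\nabla^{g}T)$ together with the algebraic terms $T_X(T_YZ)-T_Y(T_XZ)$ — nine products to contract, since each copy of $T$ already has three groups of terms — assemble into the four gradient sums. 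The chain rules of Section~\ref{sect:KillFirst} and the identity \eqref{eq:GradTilde} are convenient for rewriting and simplifying the $\tilde g$-gradients that arise. One point needs extra care: the terms in which $\nabla^{g}$ differentiates a projection $\mathcal{P}_a$, since for a general compatible $g$ the splitting $TM=E_1+\cdots+E_l$ need not be $\nabla^{g}$-parallel; I would show these contributions drop out after the contractions, using $\sum_\alpha\langle(\nabla^{g}_X\mathcal{P}_a)e_\alpha,e_\alpha\rangle=\nabla^{g}_X(\tr\mathcal{P}_a)=0$ and its relatives at $p$.

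The real difficulty is entirely combinatorial: in the double sum $\sum_\alpha\sum_\beta$ one must separate the case $i(\alpha)=i(\beta)$ from $i(\alpha)\neq i(\beta)$, and, within the latter, keep track of whether one or both of these indices equal the block index $i$, as well as the genuinely three-index interactions, in order to recover the precise coefficients $m_i-1$, $m_j$, $(m_i-1)(m_i-4)$, $2m_j(m_i-2)$, $m_j(m_j-1)$, $m_jm_k$. A useful organizing principle is to split each of the three sums in $T_XY$ according to whether the summation index $a$ equals the block of $X$, the block of $Y$, or neither: the diagonal pieces feed the single-index sums, the mixed pieces feed the two-index sums, and the free index feeds the three-index sum. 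I expect the coefficients $(m_i-1)(m_i-4)$ and $2m_j(m_i-2)$ to be the ones most prone to error, because each of them collects contributions from both $(\nabla^{g}T)$ and $T\cdot T$ that have to be combined before the pattern in \eqref{eq:ScalarCurvatureDifference} appears.
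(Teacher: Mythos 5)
Your proposal is correct and follows essentially the same route as the paper's proof: the same curvature-difference identity for the tensor $T=\nabla^{\tilde g}-\nabla^{g}$ from Proposition~\ref{prop:DifferenceConnection}, substituted and contracted twice in a $g$-orthonormal frame adapted to the splitting (with $\tilde e_\alpha=f_{i(\alpha)}^{-1}e_\alpha$ the associated $\tilde g$-orthonormal frame), passing through the intermediate formulas \eqref{eq:Ricj} and \eqref{eq:Rj} before summing over the blocks. The only differences are presentational (working in a frame parallel at a point and your explicit remark on the terms where $\nabla^{g}$ hits the projections $\mathcal{P}_a$), so this is the paper's argument.
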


\begin{remark}
Setting $f_1=\dots=f_l=:f$ in \eqref{eq:ScalarCurvatureDifference} yields the well-known formula
for the scalar curvature of the conformally deformed metric $\tilde{g}=f^2 g$. 
Also, setting $f_1\equiv 1$ and $f_i\in C^{\infty}_+(M_1)$ for all $i>1$ yields the scalar curvature formula for the multiply warped product metric $g_1\oplus f_2^2g_2\oplus\cdots \oplus f_l^2g_l$ (cf.~Dobarro--\"Unal \cite[Proposition 2.6]{MR2157416}). 
\end{remark}


\begin{proof}[Proof of Theorem \ref{thm:MulticonformalDeformation}]
To begin with, we recall the general formula 
\begin{align}\label{eq:CurvatureDifference}
 R^{\tilde{g}}(X, Y)Z
=R(X, Y)Z+(\nabla_XT)_YZ-(\nabla_YT)_XZ+T_XT_YZ-T_YT_XZ
\end{align}
for $X, Y, Z\in\Gamma(TM)$, 
which can be shown by summing up the following three identities 
\begin{align*}
-\nabla^{\tilde{g}}_{[X, Y]}Z
&=-\nabla_{[X, Y]}Z-{\color{blue}T_{[X, Y]}Z}, \\
-\nabla^{\tilde{g}}_Y\nabla^{\tilde{g}}_XZ
&=-\nabla^{\tilde{g}}_Y(\nabla_XZ+T_XZ)\\
&=-\nabla_Y\nabla_XZ-(\nabla_YT)_XZ{\color{blue}-T_{\nabla_YX}Z}{\color{orange}-T_X(\nabla_YZ)-T_Y(\nabla_XZ)}-T_YT_XZ, \\
\nabla^{\tilde{g}}_X\nabla^{\tilde{g}}_YZ
&=\nabla_X\nabla_YZ+(\nabla_XT)_YZ{\color{blue}+T_{\nabla_XY}Z}{\color{orange}+T_Y(\nabla_XZ)+T_X(\nabla_YZ)}+T_XT_YZ.  
\end{align*}
Here and henceforth in the proof, quantities without any superscript such as $R$ and $\nabla$ are understood to be the ones with respect to $g$.  

We want to express all these identities in terms of the functions $f_1, \ldots, f_l$ and their derivatives. 
Let $X\in\Gamma(E_i)$, $Y\in\Gamma(E_j)$, $Z\in\Gamma(E_k)$. 
Then \eqref{eq:DifferenceConnection} yields 
\begin{align}
\begin{split}
	T_XY
	&= \langle X, \grad  f_j\rangle\frac{1}{f_j}Y
	+\langle Y, \grad  f_i\rangle\frac{1}{f_i}X-\langle X, Y\rangle \sum_{a=1}^l\frac{f_i}{f_a^2}\mathcal{P}_a \grad  f_i, 
\end{split}\label{eq:TXYmedium}\\
	T_YZ
&=\langle Y, \grad  f_k\rangle\frac{1}{f_k}Z
+\langle Z, \grad  f_j\rangle\frac{1}{f_j}Y
-\langle Y, Z\rangle\sum_{a=1}^l\frac{f_j}{f_a^2}\mathcal{P}_a \grad  f_j.  \label{eq:TYZ}
\end{align}
Up to interchanging the roles of $X$ and $Y$ there are two terms that we need to take care of. Namely, $(\nabla_X T)_YZ$ and $T_X T_Y Z$

On the one hand, \eqref{eq:TYZ} and \eqref{eq:DifferenceConnection} yields 
\begin{align*}
(\nabla _XT)_YZ
&=\nabla _X(T_YZ)-T_{\nabla _XY}Z-T_Y(\nabla _XZ)\\
&= \langle Y, \nabla_X \grad f_k \rangle \frac{1}{f_k} Z - \langle Y, \grad f_k \rangle \frac{X(f_k)}{f_k^2} Z  \\
& \quad + \langle Z, \nabla_X \grad f_j \rangle \frac{1}{f_j} Y - \langle Z, \grad f_j \rangle \frac{X(f_j)}{f_j} Y\\
&\quad- \langle Y,Z \rangle \sum_{a=1}^l \left( \frac{X(f_j)}{f_a^2} - \frac{2f_j X(f_a)}{f_a^3} \right) \mathcal{P}_a \grad f_j \\
&\quad - \langle Y, Z \rangle \sum_{a=1}^l \frac{f_j}{f_a^2} \mathcal{P}_a \grad f_j\\
&={\color{blue}\Hess  f_k(X, Y)\frac{1}{f_k}Z}
		{\color{blue}-\langle X, \grad  f_k\rangle\langle Y, \grad  f_k \rangle\frac{1}{f_k^2}Z}\\
	&\quad{\color{orange}+\Hess  f_j(X, Z)\frac{1}{f_j} Y}
		{\color{orange}-\langle X, \grad  f_j\rangle\langle Z, \grad  f_j\rangle\frac{1}{f_j^2}Y}\\
	&\quad{\color{olive}-\langle Y, Z\rangle\langle X, \grad  f_j\rangle\sum_{a=1}^l \frac{1}{f_a^2}\mathcal{P}_a \grad  f_j}\\
	&\quad{\color{olive}+2\langle Y, Z\rangle\sum_{a=1}^l \langle X, \grad  f_a\rangle\frac{f_j}{f_a^3}\mathcal{P}_a \grad  f_j}\\
		&\quad - \langle Y, Z\rangle\sum_{a=1}^l\frac{f_j}{f_a^2}\mathcal{P}_a \nabla _X\grad  f_j.  
\end{align*}
Taking the inner product with $W \in \Gamma(E_h)$ leads to, 
\begin{align}\label{eq:NablaXTYZW}\begin{split}
&\langle\nabla_XT_YZ , W\rangle\\
&={\color{blue}\langle Z, W\rangle\Hess  f_k(X, Y)\frac{1}{f_k}}
		{\color{blue}-\langle Z,  W\rangle\langle X, \grad  f_k\rangle\langle Y, \grad  f_k \rangle\frac{1}{f_k^2}}\\
	&\quad{\color{orange}+\langle Y, W\rangle\Hess  f_j(X, Z)\frac{1}{f_j}}
		{\color{orange}-\langle Y, W\rangle\langle X, \grad  f_j\rangle\langle Z, \grad  f_j\rangle\frac{1}{f_j^2}}\\
	&\quad{\color{olive}-\langle Y, Z\rangle\langle X, \grad  f_j\rangle\langle W, \grad  f_j\rangle\frac{1}{f_h^2}}\\
	&\quad{\color{olive}+2\langle Y, Z\rangle \langle X, \grad  f_h\rangle\langle W, \grad  f_j\rangle\frac{f_j}{f_h^3}}
		{\color{olive}-\langle Y, Z\rangle\Hess  f_j(X, W)\frac{f_j}{f_h^2}}.  
\end{split}\end{align}

On the other hand, plug \eqref{eq:TYZ} into \eqref{eq:TXYmedium} to get 
\begin{align*}
T_XT_YZ
&={\color{blue}\sum_{a=1}^l \langle X, \grad  f_a\rangle\frac{1}{f_a}\mathcal{P}_a  T_YZ}
{\color{orange}+\langle T_YZ, \grad  f_i\rangle\frac{1}{f_i}X}\\
&\quad{\color{olive}-\sum_{a=1}^l\langle X, T_YZ\rangle \frac{f_i}{f_a^2}\mathcal{P}_a \grad  f_i}\\
&={\color{blue}\langle X, \grad  f_k\rangle\langle Y, \grad  f_k\rangle\frac{1}{f_k^2}Z}
	{\color{blue}+\langle X, \grad  f_j\rangle\langle Z, \grad  f_j\rangle\frac{1}{f_j^2}Y}\\
&\quad{\color{blue}-\langle Y, Z\rangle\sum_{a=1}^l \langle X, \grad  f_a\rangle \frac{f_j}{f_a^3}\mathcal{P}_a \grad  f_j}\\
&\quad{\color{orange}+\langle Y,\grad  f_k\rangle\langle Z,\grad  f_i\rangle\frac{1}{f_if_k}X}
	{\color{orange}+\langle Y,\grad  f_i\rangle\langle Z, \grad  f_j\rangle\frac{1}{f_if_j}X}\\
&\quad{\color{orange}-\langle Y, Z\rangle\sum_{c=1}^l\langle \mathcal{P}_c \grad  f_j, \grad  f_i\rangle\frac{f_j}{f_c^2f_i}X}\\
&\quad{\color{olive}-\langle X, Z\rangle\langle Y, \grad  f_k\rangle\sum_{a=1}^l\frac{f_i}{f_a^2f_k}\mathcal{P}_a \grad  f_i}\\
&\quad{\color{olive}-\langle X, Y\rangle\langle Z, \grad  f_j\rangle\sum_{a=1}^l\frac{f_i}{f_a^2f_j}\mathcal{P}_a \grad  f_i}\\
&\quad{\color{olive}+\langle Y, Z\rangle\langle X, \grad  f_j\rangle\sum_{a=1}^l \frac{f_j}{f_a^2f_i}\mathcal{P}_a \grad  f_i}.  
\end{align*}
Taking the inner product with $W$, 
\begin{align}\label{eq:TXTYZW}\begin{split}
&\langle T_XT_YZ, W\rangle\\
&={\color{blue}\langle Z, W\rangle\langle X, \grad \!  f_k\rangle\langle Y, \grad \!  f_k\rangle\frac{1}{f_k^2}}
{\color{blue}+\langle Y, W\rangle\langle X, \grad \!  f_j\rangle\langle Z, \grad \!  f_j\rangle\frac{1}{f_j^2}}\\
&\quad{\color{blue}-\langle Y, Z\rangle\langle X, \grad \!  f_h\rangle\langle W, \grad \!  f_j\rangle \frac{f_j}{f_h^3}}\\
&\quad{\color{orange}+\langle X, W\rangle\langle Y,\grad \!  f_k\rangle\langle Z,\grad \!  f_i\rangle\frac{1}{f_if_k}}
{\color{orange}+\langle X, W\rangle\langle Y,\grad \!  f_i\rangle\langle Z, \grad \!  f_j\rangle\frac{1}{f_if_j}}\\
&\quad{\color{orange}-\langle X, W\rangle\langle Y, Z\rangle\sum_{c=1}^l\langle \grad \! _c f_j, \grad \! _c f_i\rangle\frac{f_j}{f_c^2f_i}}\\
&\quad{\color{olive}-\langle X, Z\rangle\langle Y, \grad \!  f_k\rangle\langle W, \grad \!  f_i\rangle\frac{f_i}{f_h^2f_k}}
{\color{olive}-\langle X, Y\rangle\langle Z, \grad \!  f_j\rangle\langle W, \grad \!  f_i\rangle\frac{f_i}{f_h^2f_j}}\\
&\quad{\color{olive}+\langle Y, Z\rangle\langle X, \grad \!  f_j\rangle\langle W, \grad \!  f_i\rangle\frac{f_j}{f_h^2f_i}}.  
\end{split}\end{align}

Therefore, \eqref{eq:CurvatureDifference}, \eqref{eq:NablaXTYZW}, and \eqref{eq:TXTYZW} yields a formula for the difference
\begin{align*}
\langle R^{\tilde{g}}(X,Y)Z - R(X,Y)Z,W \rangle
\end{align*}
for all $X \in \Gamma(E_i), Y \in \Gamma(E_j), Z \in \Gamma(E_k), W \in \Gamma(E_h)$ and by linearity, it extends to an identity for all vector fields on $M$. However, the resulting formula is a very long expression. As we are interested in a formula for the scalar curvature for a multiconformal change we only consider the difference $\langle R^{\tilde{g}}(X,Y)Y - R(X,Y)Y,X \rangle$ for $X \in \Gamma(E_i)$ and $Y \in \Gamma(E_j)$. In that case we obtain

\begin{align}
\begin{split}
\langle &R^{\tilde{g}}(X, Y)Y-R(X, Y)Y, X\rangle\\
& ={\color{orange}\langle X, Y\rangle\Hess f_j(X, Y)\frac{1}{f_j}}
	{\color{olive}+\langle X, Y\rangle\Hess f_i(Y, X)\frac{1}{f_i}}\\
	&\quad{\color{orange}-\langle X, X\rangle\Hess f_i(Y, Y)\frac{1}{f_i}}
	{\color{olive}-\langle Y, Y\rangle\Hess f_j(X, X)\frac{f_j}{f_i^2}}\\		
	&\quad\uwave{{\color{black}-4\langle X, Y\rangle\langle X, \grad f_i\rangle\langle Y, \grad f_j\rangle\frac{1}{f_if_j}}}\\
	&\quad{\color{olive}+2\langle Y, Y\rangle \langle X, \grad f_i\rangle\langle X, \grad f_j\rangle\frac{f_j}{f_i^3}}\\
	&\quad{\color{orange}+2\langle X, X\rangle\langle Y,\grad f_i\rangle\langle Y,\grad f_j\rangle\frac{1}{f_if_j}}\\
	&\quad{\color{orange}-\langle X, X\rangle\langle Y, Y\rangle\sum_{c=1}^l\langle \grad_c f_i, \grad_c f_j\rangle\frac{f_j}{f_c^2f_i}}\\
	&\quad{\color{orange}+\langle X, Y\rangle^2\sum_{c=1}^l\langle \grad_c f_i, \grad_c f_j\rangle\frac{f_i}{f_c^2f_j}}.  
\end{split}\label{eq:SectionalCurvatureDifference}
\end{align}

Taking an $g$-orthonormal frame $\{e_{\alpha}\}_{\alpha=1}^m$ 
so that for each $\alpha\in\{1, \dots, m\}$ there exists some $i=i(\alpha)\in\{1, \dots, l\}$ with $e_{\alpha}\in\Gamma(E_i)$, 
we define the associated $\tilde{g}$-orthonormal frame via $\{\tilde{e}_{\alpha}=f_{i(\alpha)}^{-1}e_{\alpha}\}_{\alpha=1}^m$.  
With respect to these orthonormal frames we conclude
\begin{align*}
\Ric^{\tilde{g}}(Y, Y)-\Ric(Y, Y)
&=\sum_{\alpha=1}^m\langle\!\langle R^{\tilde{g}}(\tilde{e}_{\alpha}, Y)Y, \tilde{e}_{\alpha}\rangle\!\rangle-\langle R(e_{\alpha}, Y)Y, e_{\alpha}\rangle\\
&=\sum_{\alpha=1}^m\langle  R^{\tilde{g}}(e_{\alpha}, Y)Y-R(e_{\alpha}, Y)Y, e_{\alpha}\rangle\\
&=\sum_{i=1}^l\sum_{\alpha}\langle R^{\tilde{g}}(e_{\alpha}, Y)Y-R(e_{\alpha}, Y)Y, e_{\alpha}\rangle.
\end{align*}
Inserting \eqref{eq:SectionalCurvatureDifference} leads to
\begin{align*}
&{\color{blue}\sum_{i=1}^l\sum_{\alpha}\langle  R^{\tilde{g}}(e_{\alpha}, Y)Y-R(e_{\alpha}, Y)Y, e_{\alpha}\rangle}\\
&={\color{orange}\sum_{i=1}^l\sum_{\alpha}\langle e_{\alpha}, Y\rangle\Hess f_j(e_{\alpha}, Y)\frac{1}{f_j}}
	{\color{olive}+\sum_{i=1}^l\sum_{\alpha}\langle e_{\alpha}, Y\rangle\Hess f_i(Y, e_{\alpha})\frac{1}{f_i}}\\
	&\quad{\color{orange}-\sum_{i=1}^l\sum_{\alpha}\Hess f_i(Y, Y)\frac{1}{f_i}}
	{\color{olive}-\sum_{i=1}^l\sum_{\alpha}\vert Y \vert^2 \Hess f_j(e_{\alpha}, e_{\alpha})\frac{f_j}{f_i^2}}\\		
	&\quad{\color{black}-4\sum_{i=1}^l\sum_{\alpha}\langle e_{\alpha}, Y\rangle\langle e_{\alpha}, \grad f_i\rangle\langle Y, \grad f_j\rangle\frac{1}{f_if_j}}\\
	&\quad{\color{olive}+2\sum_{i=1}^l\sum_{\alpha}\vert Y\vert^2 \langle e_{\alpha}, \grad f_i\rangle\langle e_{\alpha}, \grad f_j\rangle\frac{f_j}{f_i^3}}\\
	&\quad{\color{orange}+2\sum_{i=1}^l\sum_{\alpha}\langle Y,\grad f_i\rangle\langle Y,\grad f_j\rangle\frac{1}{f_if_j}}\\
	&\quad{\color{orange}-\sum_{i=1}^l\sum_{\alpha}\vert Y\vert^2 \sum_{c=1}^l\langle \grad_c f_i, \grad_c f_j\rangle\frac{f_j}{f_c^2f_i}}\\
	&\quad{\color{orange}+\sum_{i=1}^l\sum_{\alpha}\langle e_{\alpha}, Y\rangle^2\sum_{c=1}^l\langle \grad_c f_i, \grad_c f_j\rangle\frac{f_i}{f_c^2f_j}}\\
&=2\Hess f_j(Y, Y)\frac{1}{f_j}
	{\color{orange}-\sum_{i=1}^lm_i\Hess f_i(Y, Y)\frac{1}{f_i}}\\
	&\quad{\color{olive}-\sum_{i=1}^l\lvert Y\rvert^2\Delta_i f_j\frac{f_j}{f_i^2}}
	{\color{black}-4\frac{\langle Y, \grad f_j\rangle^2}{f_j^2}}\\
	&\quad{\color{olive}+2\sum_{i=1}^l\lvert Y\rvert^2\langle \grad_i f_i, \grad_i f_j\rangle\frac{f_j}{f_i^3}}
	{\color{orange}+2\sum_{i=1}^lm_i\langle Y,\grad f_i\rangle\langle Y,\grad f_j\rangle\frac{1}{f_if_j}}\\
	&\quad{\color{orange}-\sum_{i=1}^lm_i\lvert Y\rvert^2\sum_{c=1}^l\langle \grad_c f_i, \grad_c f_j\rangle\frac{f_j}{f_c^2f_i}}
	{\color{orange}+\lvert Y\rvert^2\sum_{c=1}^l\frac{\lvert\grad_c f_j\rvert^2}{f_c^2}}.  
\end{align*}
We thus obtain 
\begin{align}\label{eq:Ricj}
\begin{split}
\Ric^{\tilde{g}}_j-\Ric_j
&=2\frac{\Hess_j f_j}{f_j}{\color{orange}-\sum_{i=1}^lm_i\frac{\Hess_j f_i}{f_i}}{\color{olive}-\sum_{i=1}^l\Delta_i f_j\frac{f_j}{f_i^2}}g_j\\
	&\quad{\color{black}-4\frac{d_jf_j\otimes d_jf_j}{f_j^2}}{\color{orange}+2\sum_{i=1}^lm_i\frac{d_jf_i\otimes d_jf_j}{f_if_j}}\\
	&\quad{\color{olive}+2\sum_{i=1}^l\langle \grad_i f_i, \grad_i f_j\rangle\frac{f_j}{f_i^3}}g_j\\
	&\quad{\color{orange}-\sum_{i=1}^lm_i\sum_{c=1}^l\langle \grad_c f_i, \grad_c f_j\rangle\frac{f_j}{f_c^2f_i}}g_j
	{\color{orange}+\sum_{c=1}^l\frac{\lvert\grad_c f_j\rvert^2}{f_c^2}}g_j. 
\end{split}
\intertext{Taking the trace with respect to $g$ in \eqref{eq:Ricj} yields}
\label{eq:Rj}
\begin{split} R^{\tilde{g}}_j-\frac{R_j}{f_j^2}
&=2\frac{\Delta_j f_j}{f_j^3}
	{\color{orange}-\sum_{i=1}^lm_i\frac{\Delta_jf_i}{f_j^2f_i}}
	{\color{olive}-\sum_{i=1}^lm_j\frac{\Delta_i f_j}{f_i^2f_j}}\\		
	&\quad{\color{black}-4\frac{\lvert\grad_j f_j\rvert^2}{f_j^4}}
	{\color{orange}+2\sum_{i=1}^lm_i\frac{\langle\grad_jf_i, \grad_jf_j\rangle}{f_j^3f_i}}\\
	&\quad{\color{olive}+2\sum_{i=1}^lm_j\frac{\langle \grad_i f_i, \grad_i f_j\rangle}{f_i^3f_j}}\\
	&\quad{\color{orange}-\sum_{i=1}^lm_im_j\sum_{c=1}^l\frac{\langle \grad_c f_i, \grad_c f_j\rangle}{f_c^2f_if_j}}
	{\color{orange}+m_j\sum_{c=1}^l\frac{\lvert\grad_c f_j\rvert^2}{f_c^2f_j^2}}. 
\end{split}
\end{align}
Since $R^{\tilde{g}} = \sum_{j=1}^l R^{\tilde{g}}_j$ we sum \eqref{eq:Rj} over $j\in\{1, \dots, l\}$ and derive
\begin{align*}
&R^{\tilde{g}}-\sum_{j=1}^l\frac{R_j}{f_j^2}\\
&=2\sum_{j=1}^l\frac{\Delta_j f_j}{f_j^3}
	-2\sum_{i, j=1}^lm_i\frac{\Delta_jf_i}{f_j^2f_i}\\
	&\quad-4\sum_{j=1}^l\frac{\lvert\grad_j f_j\rvert^2}{f_j^4}
	+4\sum_{i, j=1}^lm_i\frac{\langle\grad_jf_i, \grad_jf_j\rangle}{f_j^3f_i}\\
	&\quad-\sum_{i, j=1}^lm_im_j\sum_{c=1}^l\frac{\langle \grad_c f_i, \grad_c f_j\rangle}{f_c^2f_if_j}
	+\sum_{j=1}^lm_j\sum_{c=1}^l\frac{\lvert\grad_c f_j\rvert^2}{f_c^2f_j^2}\\
&=-2\sum_{i=1}^l(m_i-1)\frac{\Delta_i f_i}{f_i^3}
	-2\sum_{i=1}^l\sum_{j\neq i}m_j\frac{\Delta_if_j}{f_i^2f_j}\\
	&\quad-\sum_{i=1}^l(m_i-1)(m_i-4)\frac{\lvert\grad_i f_i\rvert^2}{f_i^4}
	-2\sum_{i=1}^l\sum_{j\neq i}m_j(m_i-2)\frac{\langle\grad_if_i, \grad_if_j\rangle}{f_j^3f_i}\\
	&\quad-\sum_{i=1}^l\sum_{j\neq i}m_j(m_j-1)\frac{\lvert\grad_i f_j\rvert^2}{f_i^2f_j^2}
	-\sum_{i=1}^l\sum_{j\neq i, k\neq i, j\neq k}m_jm_k\frac{\langle \grad_i f_j, \grad_i f_k\rangle}{f_i^2f_jf_k}. 
\end{align*}
This is equivalent to the claimed formula \eqref{eq:ScalarCurvatureDifference} for the scalar curvature under a multiconformal change.
\end{proof}

\section{Integral and pointwise formulas}\label{sect:Formulas}
We summarize some formulas which will be necessary in the following  sections. 
Let $(M, g)=(M_1, g_1)\times\cdots\times(M_l, g_l)$ be a direct product Riemannian manifold and $\tilde{g}=f_1^2g_1\oplus\cdots\oplus f_l^2g_l$ multiconformal to $g$. 
Since we would like to apply the results of Section \ref{sect:KillFirst} to the identity for the scalar curvature of $(M,\tilde{g})$ derived in Theorem \ref{thm:MulticonformalDeformation}, it is convenient to write
\begin{align*}
R^{\tilde{g}}=\sum_{i=1}^l\frac{R^g_i}{f_i^2}+\sum_{i=1}^l\frac{\rho_i}{f_i^2}
\end{align*}
where $\rho_i$ is defined by 
\begin{align}\label{eq. Definition Rho}
\begin{split}
\rho_i&=\rho_i^g(f_1, \dots, f_l)\\
&=-2(m_i-1)\frac{\Delta^g_if_i}{f_i}
	-2\sum_{j\neq i}m_j\frac{\Delta^g_if_j}{f_j}\\
&\quad -(m_i-1)(m_i-4)\frac{{\color{blue}\lvert\grad^g_if_i\rvert^2}}{f_i^2}
	-2(m_i-2)\displaystyle\sum_{j\neq i}m_j\frac{{\color{orange}\langle\grad^g_i f_i, \grad^g_i f_j\rangle}}{f_if_j}\\
&\quad 	-\sum_{j\neq i}m_j(m_j-1)\frac{{\color{olive}\lvert\grad^g_if_j\rvert^2}}{f_j^2}
	-\sum_{j\neq i, k\neq i, j\neq k}m_jm_k\frac{{\color{magenta}\langle \grad^g_i f_j, \grad^g_i f_k\rangle}}{f_jf_k}, 
\end{split}
\end{align}
so that only derivatives in the direction of $M_i$ are involved. 
We observe that $\rho_i$ is invariant under rescalings; in other words, for all real numbers $c_1, \dots, c_l>0$, $\rho_i^g(c_1f_1, \dots, c_lf_l)=\rho_i^g(f_1, \cdots, f_l)$. 

For real numbers $q_1, \dots, q_l$ and for each $i\in\{1, \dots, l\}$ we consider the integral, 
\begin{align}\label{eq:IntegralRhoi}\begin{split}
&{\color{blue}\int_M\frac{\rho_i}{f_i^2}f_1^{q_1}\cdots f_l^{q_l}d\mu^g}\\
&=(m_i-1)(2q_i-m_i-2)\int_M\frac{{\color{blue}\lvert \grad^g_i f_i\rvert^2}}{f_i^4} f_1^{q_1}\cdots f_l^{q_l}d\mu^g\\
&\quad+\sum_{j\neq i}m_j(2q_j-m_j-1)\int_M\frac{{\color{olive}\lvert\grad^g_if_j\rvert^2}}{f_i^2f_j^2} f_1^{q_1}\cdots f_l^{q_l}d\mu^g\\
&\quad+2\sum_{j\neq i}(m_iq_j+q_im_j-m_im_j-q_j)\int_M\frac{{\color{orange}\langle \grad^g_i f_i, \grad^g_i f_j\rangle}}{f_i^3f_j} f_1^{q_1}\cdots f_l^{q_l}d\mu^g\\ 
&\quad+\sum_{j\neq i, k\neq i, j\neq k}(m_jq_k+q_jm_k-m_jm_k)
		\int_M\frac{{\color{magenta}\langle \grad^g_i f_j, \grad^g_i f_k\rangle}}{f_i^2f_jf_k}f_1^{q_1}\cdots f_l^{q_l}d\mu^g.
\end{split}\end{align}
To get rid of the second derivatives we need to assume that $g=g_1\oplus \cdots\oplus  g_l$ is a direct product metric in order to integrate by parts along each $M_i$ separately. Under this assumption we obtain
\begin{align}
\begin{split} \label{eq:IntegralByParts1}
&-2(m_i-1)\int_M\frac{\Delta^g_if_i}{f_i^3}f_1^{q_1}\cdots f_l^{q_l}d\mu^g\\
&\qquad =2(m_i-1)(q_i-3)\int_M\frac{{\color{blue}\lvert \grad^g_i f_i\rvert^2}}{f_i^4} f_1^{q_1} \cdots f_l^{q_l}d\mu^g\\
	&\qquad \quad+2(m_i-1)\sum_{j\neq i}q_j\int_M\frac{{\color{orange}\langle \grad^g_i f_i, \grad^g_i f_j\rangle}}{f_i^3f_j} f_1^{q_1} \cdots f_l^{q_l}d\mu^g
	\end{split}\\
\intertext{and}
\begin{split}\label{eq:IntegralByParts2}
& -2\sum_{j\neq i}m_j\int_M\frac{\Delta^g_if_j}{f_i^2f_j}f_1^{q_1}\cdots f_l^{q_l}d\mu^g\\
&\qquad =2(q_i-2)\sum_{j\neq i}m_j\int_M\frac{{\color{orange}\langle \grad^g_i f_i, \grad^g_i f_j\rangle}}{f_i^3f_j}f_1^{q_1}\cdots f_l^{q_l}d\mu^g\\
	&\qquad \quad+2\sum_{j\neq i}m_j(q_j-1)\int_M\frac{{\color{olive}\lvert\grad^g_i f_j\rvert^2}}{f_i^2f_j^2}f_1^{q_1}\cdots f_l^{q_l}d\mu^g\\
	&\qquad \quad+2\sum_{j\neq i, k\neq i, j\neq k}m_jq_k
		\int_M\frac{{\color{magenta}\langle \grad^g_i f_j, \grad^g_i f_k\rangle}}{f_i^2f_jf_k}f_1^{q_1}\cdots f_l^{q_l}d\mu^g. 
		\end{split} 
\end{align}

Inserting \eqref{eq:IntegralByParts1} and \eqref{eq:IntegralByParts2} into \eqref{eq:IntegralRhoi} it follows that for each $(q_1, \dots, q_l)\in\real^l$ the symmetric $(l\times l)$-matrix $B^i(q_1, \dots, q_l)$ defined by
\begin{align*}
&B^i(q_1, \dots, q_l)\\
&=\left(b^i_{jk}(q_1, \dots, q_l)\right)\\
&=-\left(\begin{smallmatrix}m_jm_k-m_jq_k-m_kq_j\end{smallmatrix}\right)_{jk}\\
&\quad-\left(\begin{smallmatrix}
m_1&&&(m_i+1)q_1-m_1q_i\\
&\ddots&&\vdots\\
&&m_{i-1}&(m_i+1)q_{i-1}-m_{i-1}q_i\\
(m_i+1)q_1-m_1q_i&\cdots&(m_i+1)q_{i-1}-m_{i-1}q_i&m_i+2q_i-2&(m_i+1)q_{i+1}-m_{i+1}q_i&\cdots&(m_i+1)q_l-m_lq_i\\
&&&(m_i+1)q_{i+1}-m_{i+1}q_i &m_{i+1}\\
&&&\vdots&&\ddots\\
&&&(m_i+1)q_l-m_lq_i&&&m_{l}
\end{smallmatrix}\right)
\end{align*}
satisfies
\begin{align}\label{eq:KeyIntegralFormula}\begin{split}
&\int_M\left(R^{\tilde{g}}-\sum_{i=1}^l\frac{R^g_i}{f_i^2}\right)f_1^{q_1}\cdots f_l^{q_l}d\mu_g\\
&=\sum_{i=1}^l\int_M\left(\sum_{j, k=1}^lb^i_{jk}\frac{\langle \grad^g_i f_j, \grad^g_i f_k\rangle}{f_jf_k}\right)\frac{f_1^{q_1}\cdots f_l^{q_l}}{f_i^2}d\mu_g. 
\end{split}\end{align}
Now we can apply the results of Section \ref{sect:KillFirst} to conclude that the right hand side is nonpositive (resp.\ nonnegative) if for all $1 \leq i \leq l$ the matrices $B^i$ are negative (resp.\ positive) definite. 

In Section \ref{sect:WarpedProduct} we consider multiconformal metrics of permutation type, see Definition \ref{def:PermutationType} and metrics of warped product type. Since in these cases the multiconformally changed metric $\tilde{g}= f_1^2 g_1\oplus \cdots \oplus f_l^2g_l$ is such that the functions $f_i$ are constants along some of the factors of the product manifold $M= M_1 \times \cdots \times M_l$ the integral formula \ref{eq:KeyIntegralFormula} simplifies. 

Let us assume that the metric $\tilde{g}= f_1^2 g_1\oplus \ldots \oplus f_l^2g_l$ is such that there is an $i \in \lbrace 1, \ldots, l \rbrace$ such that the functions $f_1, \ldots, f_l$ only depend on $M_i$, i.e.\ are constant along $M_1 \times \cdots \times M_l$.  In that case we fix a function $\varphi: M\to\real$ that is constant along $M_1\times\cdots\times M_{i-1}\times M_{i+1}\times\cdots\times M_l$. Then there are uniquely determined  functions $\alpha_1, \dots, \alpha_l: \real\to\real$ such that $f_j=\exp(\alpha_j\circ\varphi)$. 
Since 
\begin{align*}
\frac{d_kf_j}{f_j}=(\alpha_j'\circ\varphi)d_k\varphi, 
&&
\frac{\Delta^g_kf_j}{f_j}=(\alpha_j'\circ\varphi)\Delta^g_k\varphi+\left( \alpha_j''\circ\varphi+(\alpha_j'\circ\varphi)^2\right)\lvert \grad^g_k\varphi\rvert^2, 
\end{align*}
for $\tilde{g}=f_1^2g_1\oplus\cdots\oplus f_l^2g_l$,  \eqref{eq:ScalarCurvatureDifference} can be written as
\begin{align}\label{eq:KeyPointwiseFormula}\begin{split}
&f_i^2\left(R^{\tilde{g}}-\sum_{j=1}^l\frac{R_j^g}{f_j^2}\right)\\
&=-\left(2(m_i-1)\alpha_i'+\sum_{j\neq i}2m_j\alpha_j'\right)\Delta^g\varphi\\
&\quad-\left(2(m_i-1)\left(\alpha_i''+(\alpha_i')^2\right)+\sum_{j\neq i}2m_j\left(\alpha_j''+(\alpha_j')^2\right)\right.\\
&\quad\phantom{-}\quad+(m_i-1)(m_i-4)(\alpha_i')^2+\sum_{j\neq i}m_j(m_j-1)(\alpha_j')^2\\
&\quad\phantom{-}\left.\quad+\sum_{j\neq i}2m_j(m_i-2)\alpha_i'\alpha_j'+\sum_{j, k\neq i; j\neq k}m_jm_k\alpha_j'\alpha_k'\right)\lvert\grad^g\varphi\rvert^2.
\end{split}\end{align}
Hence, integral formula \eqref{eq:IntegralRhoi} with $(q_1, \dots, q_l)=(m_1, \dots, m_l)$ simplifies to
\begin{align*}
&\int_M\frac{\rho_i}{f_i^2}f_1^{m_1}\cdots f_l^{m_l}d\mu^g\\
&=(m_i-1)(m_i-2)\int_M(a_i'\circ\varphi)^2{\color{blue}\lvert \grad_i \varphi\rvert^2} \frac{f_1^{m_1}\cdots f_l^{m_l}}{f_i^2}d\mu^g\\
&\quad+\sum_{j\neq i}m_j(m_j-1)\int_M(a_j'\circ\varphi)^2{\color{olive}\lvert\grad_i\varphi\rvert^2}\frac{f_1^{m_1}\cdots f_l^{m_l}}{f_i^2}d\mu^g\\
&\quad+2(m_i-1)\sum_{j\neq i}m_j\int_M(a_i'\circ\varphi)(a_j'\circ\varphi){\color{orange}\lvert\grad_i\varphi\rvert^2} \frac{f_1^{m_1}\cdots f_l^{m_l}}{f_i^2}d\mu^g\\
&\quad+\sum_{j\neq i, k\neq i, j\neq k}m_jm_k
		\int_M(a_j'\circ\varphi)(a_k'\circ\varphi){\color{magenta}\lvert\grad_i\varphi\rvert^2}\frac{f_1^{m_1}\cdots f_l^{m_l}}{f_i^2}d\mu^g.
\end{align*}

\section{The sign of a multiconformal class}\label{sect:MulticonformalSign}

If $\dim(M) \leq 2$, we define its Yamabe constant as $\mu(M^2, [g])=4\pi\chi(M^2)$ if $\dim M=2$ and $\mu(M^1, [d\theta^2])=0$ if $\dim M=1$. 

\begin{theorem}\label{thm:SignOfMulticonformalClass}
Let $(M, g)=(M_1, g_1)\times\cdots\times(M_l, g_l)$ be a direct product of closed connected Riemannian manifolds such that $m_i \geq 2$ for all $1 \leq i \leq l$. 
\begin{enumerate}
\item $ [\![ g ]\!]$ contains a metric of positive scalar curvature if and only if $\mu(M_i, [g_i])>0$ for some $i$. 
\item $ [\![ g ]\!]$ does not contain a metric of positive scalar curvature but a scalar flat metric if and only if $\mu(M_i, [g_i])=0$ for every $i$. 	In this case, if $\tilde{g}\in [\![ g ]\!]$ has nonnegative scalar curvature, then $\tilde{g}$ is necessarily scalar flat and direct product. 
\item $ [\![ g ]\!]$ does not contain a metric of nonnegative scalar curvature if and only if $\mu(M_i, [g_i])\le 0$ for every $i$ and $\mu(M_i, [g_i])<0$ for some $i$. 
\end{enumerate}
\end{theorem}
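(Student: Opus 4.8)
The plan is to derive the trichotomy from two ingredients: a nonexistence-and-rigidity statement obtained by specializing the integral identity \eqref{eq:KeyIntegralFormula} to the exponent vector $(q_1,\dots,q_l)=(0,\dots,0)$, and an existence statement obtained from rescaled direct products. The crucial linear-algebra point is that at $(0,\dots,0)$ the matrix $B^i$ collapses to $-\boldsymbol{m}\boldsymbol{m}^{T}-\mathrm{diag}(m_1,\dots,m_{i-1},m_i-2,m_{i+1},\dots,m_l)$ with $\boldsymbol{m}=(m_1,\dots,m_l)^{T}$, so that for any vector $v$
\[
v^{T}B^i(0,\dots,0)\,v=-\Bigl(\sum_{j}m_jv_j\Bigr)^{2}-\sum_{j\neq i}m_jv_j^{2}-(m_i-2)v_i^{2}\ \le\ 0 ,
\]
and since every $m_j\ge 2$ this vanishes only at $v=0$; thus $B^i(0,\dots,0)$ is negative definite for each $i$. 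Inserting this into \eqref{eq:KeyIntegralFormula} at $(0,\dots,0)$, together with the identity $\langle\grad^g_if_j,\grad^g_if_k\rangle/(f_jf_k)=\langle d_{E_i}\log f_j,d_{E_i}\log f_k\rangle$ and the observation following Lemma~\ref{lem:ChangeOfVariables}, I obtain, for every direct product metric $g=g_1\oplus\cdots\oplus g_l$ on $M$ and every $\tilde g=f_1^2g_1\oplus\cdots\oplus f_l^2g_l\in[\![g]\!]$,
\[
\int_M R^{\tilde g}\,d\mu^g\ \le\ \sum_{i=1}^l\int_M\frac{R^g_i}{f_i^{2}}\,d\mu^g ,
\]
with equality if and only if $f_1,\dots,f_l$ are all constant on $M$ (the last step uses connectedness of $M$); recall that for a direct product metric $R^g_i$ is the pullback of the scalar curvature $R^{g_i}$ of the $i$-th factor.

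Before using this I normalize the factors: since $m_i\ge 2$, the resolution of the Yamabe problem ($m_i\ge 3$) or the uniformization theorem ($m_i=2$) yields $\psi_i\in C^{\infty}_+(M_i)\subset C^{\infty}_+(M)$ such that $g_i':=\psi_i^2 g_i$ has constant scalar curvature $c_i$ with $\operatorname{sign}c_i=\operatorname{sign}\mu(M_i,[g_i])$. Then $g'=g_1'\oplus\cdots\oplus g_l'$ is again a direct product metric with $[\![g']\!]=[\![g]\!]$, and I replace $g$ by $g'$. The existence ingredient is now immediate: when some $c_{i_0}\ge 0$, the constant multiconformal change $\tilde g=g_{i_0}'\oplus\bigoplus_{i\neq i_0}t^{2}g_i'\in[\![g]\!]$ has $R^{\tilde g}=c_{i_0}+t^{-2}\sum_{i\neq i_0}c_i$, which tends to $c_{i_0}$ as $t\to\infty$; hence $\tilde g$ has positive scalar curvature for $t$ large when $c_{i_0}>0$, while $g'$ itself is scalar flat when every $c_i=0$.

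From here the three assertions are bookkeeping. If every $c_i\le 0$, the displayed inequality gives $\int_M R^{\tilde g}\,d\mu^g\le 0$ for all $\tilde g\in[\![g]\!]$, so $[\![g]\!]$ contains no metric of positive scalar curvature; if in addition some $c_{i_0}<0$, then $\int_M R^{\tilde g}\,d\mu^g\le c_{i_0}\int_M f_{i_0}^{-2}\,d\mu^g<0$, so $[\![g]\!]$ contains no metric of nonnegative scalar curvature; and if every $c_i=0$ while $R^{\tilde g}\ge 0$, then $\int_M R^{\tilde g}\,d\mu^g=0$ forces both $R^{\tilde g}\equiv 0$ and equality in the displayed inequality, hence all $f_i$ constant, so that $\tilde g$ is scalar flat and direct product. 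Combining with the existence ingredient: $[\![g]\!]$ has a metric of positive scalar curvature iff some $\mu(M_i,[g_i])>0$, which is (1); if $[\![g]\!]$ has no such metric then all $\mu(M_i,[g_i])\le 0$, and it has a scalar flat representative iff no $\mu(M_i,[g_i])$ is strictly negative, i.e.\ iff all of them vanish, the rigidity statement then giving the ``in this case'' assertion, which is (2); finally $[\![g]\!]$ has no metric of nonnegative scalar curvature iff it has neither a positive nor a scalar flat representative, which by (1)--(2) is equivalent to all $\mu(M_i,[g_i])\le 0$ with at least one $<0$, which is (3). The case $l=1$ is the classical conformal trichotomy recalled in the introduction.

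I expect the one genuinely delicate point to be the negative definiteness of $B^i(0,\dots,0)$, in particular that strictness survives the boundary value $m_i=2$; this is precisely where the hypothesis $m_i\ge 2$ enters. Everything else is combining the integral identity \eqref{eq:KeyIntegralFormula} with the rescaled-product examples, plus the routine verification that the three cases are mutually exclusive and exhaustive; exhaustiveness is the remark that having neither a positive nor a scalar flat representative forces some $\mu(M_i,[g_i])<0$.
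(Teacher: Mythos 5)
Your proposal is correct and follows essentially the same route as the paper: normalize each factor to constant scalar curvature via uniformization/Yamabe, specialize \eqref{eq:KeyIntegralFormula} to $(q_1,\dots,q_l)=(0,\dots,0)$, verify that $B^i(0,\dots,0)=-\boldsymbol{m}\boldsymbol{m}^T-\mathrm{diag}(m_1,\dots,m_i-2,\dots,m_l)$ is negative definite precisely because $m_j\ge 2$, and combine the resulting inequality $\int_M R^{\tilde g}\,d\mu^g\le\sum_i\int_M R^g_i f_i^{-2}\,d\mu^g$ (with equality iff all $f_i$ are constant) with the rescaled direct-product examples. The equality-case rigidity and the derivation of (3) from (1)--(2) match the paper's argument as well.
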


\begin{proof}

Without loss of generality we assume that $g_i$ is a constant scalar curvature metric for all $1 \leq i \leq l$. By Theorem \ref{thm:MulticonformalDeformation} the scalar curvature of $\tilde{g} = f_1^2 g_1 \oplus \cdots \oplus f_l^2 g_l \in [\![ g ]\!]$ is given by
\begin{align*}
R^{\tilde{g}} = \sum_i \frac{R^{g_i} + \rho_i}{f_i^2},
\end{align*}
where $\rho_i$ is the scale-invariant function defined in \eqref{eq. Definition Rho} for any $1 \leq i \leq l$.

Considering \eqref{eq:KeyIntegralFormula} for $(q_1, \ldots, q_l) = (0, \ldots, 0)$ we obtain
\begin{align}\label{eq.Integralformula q=0}
\int_M \left(R^{\tilde{g}} - \sum_i \frac{R^{g_i}}{f_i^2} \right)d\mu_g \leq \sum_i \int_M \left( \sum_{j,k=1}^l b_{jk}^i \frac{\langle \grad_i^g f_j, \grad_i^g f_k \rangle}{f_j f_k}\right) \frac{1}{f_i^2} d\mu_g,
\end{align}
where $(b_{jk}^i)_{jk}$ are the entries of the symmetric matrix
\begin{align*}
B^i = -(m_jm_k)_{jk} -
\left(\begin{smallmatrix}
m_1 & & & & & & \\
& \ddots & & & & & \\
& & m_{i-1} & & & &  \\
& & & m_i -2 & & & \\
& & & & m_{i+1} & & \\
& & & & & \ddots & \\
& & & & & & m_l
\end{smallmatrix}\right). 
\end{align*}
Since $m_i \geq 2$ for all $ 1 \leq i \leq l$ the matrix $B^i$ is negative definite for any $1 \leq i \leq l$. To see this, let $x \in \real^l$. Then
\begin{align*}
\langle x, B^i x \rangle = -\left( \sum_j m_j x_j \right)^2 - \sum_j (m_j - 2 \delta_{ij}) x_j^2.
\end{align*}
We note that the first summand is nonpositive and $0$ if and only if $(x_1, \ldots, x_l) \perp (m_1, \ldots, m_l)$ while the second summand is nonpositive and $0$ if and only if $m_i =2$ and $x= (0, \ldots, 0, x_i, 0, \ldots, 0)$. As these both sets are disjoint it follows that $B^i$ is negative definite. 

Applying Lemma \ref{lem:ChangeOfVariables} we conclude that the right-hand side of \eqref{eq.Integralformula q=0} is nonpositive and $0$ if and only if $\grad^g f_i = 0$ for all $1 \leq i \leq l$. In particular,
\begin{align}\label{eq:KeqInequality}
\int_M \left(R^{\tilde{g}} - \sum_i \frac{R^{g_i}}{f_i^2}\right)d\mu^g \leq 0
\end{align}
where equality holds if and only if $f_1, \ldots, f_l$ are all constant.

Using this inequality we can now prove the statements of Theorem \ref{thm:SignOfMulticonformalClass}:

\begin{enumerate}
\item If $[\![ g ]\!]$ contains a metric $\tilde{g}$ of positive scalar curvature, then it follows from \eqref{eq:KeqInequality} that
\begin{align*}
0 < R^{\tilde{g}}  \leq  \sum_i R^{g_i} \int_M f_i^{-2} d\mu_g.
\end{align*}
As $f_1, \ldots, f_l$ are positive functions there has to be at least one $i \in \lbrace 1, \ldots, l$ such that $\mu(M_i, [g_i ]) > 0$. On the other hand, if $\mu(M_i, [g_i]) > 0$ for some $i$, then an appropriate scaling of the single factors leads to a positive scalar curvature metric.

\item If $[\![ g ]\!]$ does not contain a metric of positive scalar curvature but a scalar flat metric $\tilde{g}$, then
\begin{align*}
0 = R^{\tilde{g}} \leq \sum_i R^{g_i} \int_M f_i^{-2} d\mu_g
\end{align*}
by \eqref{eq:KeqInequality}. Moreover, $\mu(M_i, [g_i]) \leq 0$ for all $1 \leq i \leq l$ as otherwise there would be a metric of positive scalar curvature in $[\![ g ]\!]$. Thus, it follows that the above inequality is satisfied if and only if $R^{g_i} = 0$, i.e.\  $\mu(M_i, [g_i]) = 0$ for all $1 \leq i \leq l$. As in this case the above inequality is in fact an equality the functions $f_1, \ldots, f_l$ have to be constant. In particular, the scalar flat metric $\tilde{g}$ is a product metric.
\end{enumerate}
Lastly, (3) is now an immediate consequence from (1) and (2). 
\end{proof}

As remarked in Sect.~\ref{sect:EinsteinHilbertRestrictions}, $\sigma(M)>0$ (resp.~$\sigma(M,  [\![ g ]\!])>0$) if and only if $M$ (resp.~$ [\![ g ]\!]$) carries a metric of positive scalar curvature. However, it is well known that the Kazdan--Warner trichotomy (cf.~\cite[Theorem 0.1]{MR2408269}) does not completely correspond to the sign of $\sigma(M)$. That is, if $M$ does not carry a positive scalar curvature metric but a scalar flat one, then $\sigma(M)=0$, but the converse does not hold in general. We note that a similar discrepancy holds for $\sigma(M,  [\![ g ]\!])$. That is, in Case (2) of Theorem \ref{thm:SignOfMulticonformalClass}, we have $\sigma(M,  [\![ g ]\!])=0$, while if $\mu(M_i, [g_i])\le 0$ for every $i$,  $\mu(M_i, [g_i])=0$ for some $i$, and $\mu(M_j, [g_i])<0$ for some $j$, then $\sigma(M,  [\![ g ]\!])=0$ but we are in Case (3). It is interesting to ask whether $\sigma(M,  [\![ g ]\!])<0$ holds if $\mu(M_i, [g_i])<0$ for every $i$. 

\begin{remark}\label{rmk:DimensionalAssumption}
Theorem \ref{thm:SignOfMulticonformalClass} has a technical assumption that no factor $M_i$ can be diffeomorphic to $S^1$. 
In the other extreme case where every factor $M_i$ is diffeomorphic to $S^1$, the enlargeability obstruction (Gromov--Lawson \cite{MR569070, MR720933}) or the stable minimal hypersurface obstruction (Schoen--Yau \cite{MR541332, MR535700, Schoen:2017aa}) show that $(M, g)=S^1(1)\times\cdots \times S^1(1)$ falls into case (2) of Theorem \ref{thm:SignOfMulticonformalClass}. It is interesting to ask whether our dimensional assumption $m_1, \dots, m_l\ge 2$ can be removed. The difficulty is that, as soon as there is an $S^1$-factor, one can conformally deform the flat metric $d\theta^2$ without changing the scalar curvature. 
\end{remark}

\section{The infimum of the Yamabe constants}\label{sect:InfYamabeConst}

In Sect.~\ref{sect:MulticonformalSign}, we were concerned with the supremum of the Yamabe constants within a multiconformal class. 
Theorem \ref{thm:InfYamabeConstant} below shows that its infimum is always $-\infty$. 

\begin{lemma}\label{lem:YamabeInf}
Let $(M^m, g)=(M_1^{m_1}, g_1)\times \dots\times (M_l^{m_l}, g_l)$ be a direct product of closed connected Riemannian manifolds, $l\ge 2$, and $m_1, \dots, m_l\ge 1$.  
If there exist $i\in\{1, \dots, l\}$ and $f_1, \dots, f_l: M\to\real_{>0}$ so that 
${\color{blue}\int_M\frac{R^g_i+\rho_i}{f_i^2}f_1^{m_1}\cdots f_l^{m_l}d\mu^g}<0$, 
then $\inf_{\tilde{g}\in [\![ g ]\!]}E(\tilde{g})=-\infty$.  
\end{lemma}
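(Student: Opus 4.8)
The goal is to show $\inf_{\tilde g\in[\![g]\!]}E(\tilde g)=-\infty$ under the hypothesis that some $f_1,\dots,f_l$ realize $\int_M\frac{R^g_i+\rho_i}{f_i^2}f_1^{m_1}\cdots f_l^{m_l}\,d\mu^g<0$. The natural strategy is to produce a one-parameter (or one-index) family of metrics $\tilde g^{(t)}\in[\![g]\!]$ whose Einstein--Hilbert quotient $E$ tends to $-\infty$. Recall that for a metric $\tilde g$ of dimension $m$, $E(\tilde g)=\int_M R^{\tilde g}\,d\mu^{\tilde g}\big/\bigl(\Vol(M,\tilde g)\bigr)^{2/p_m}$ with $p_m=2m/(m-2)$, so the exponent in the denominator is $(m-2)/m<1$. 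Thus it suffices to exhibit metrics with total scalar curvature $\int_M R^{\tilde g}\,d\mu^{\tilde g}$ bounded above by a \emph{negative} constant while the volume $\Vol(M,\tilde g)$ goes to $+\infty$; then $E\to-\infty$.

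**Key steps.** First I would fix the functions $f_1,\dots,f_l$ given by the hypothesis, and for a large constant $c>0$ consider the rescaled multiconformal factors $\tilde f_j = c^{?}f_j$ — more precisely, I would look for metrics of the form $f_1^2 g_1\oplus\cdots\oplus f_l^2 g_l$ with the $f_j$ multiplied by suitable powers of a parameter, exploiting the scale-invariance of each $\rho_i$ noted just after \eqref{eq. Definition Rho}. The cleanest choice is to keep the \emph{shape} of the metric fixed and simply rescale the whole metric $\tilde g\mapsto \Lambda^2\tilde g$ by a constant $\Lambda\to\infty$: then $\Vol\to\infty$ while $\int_M R\,d\mu$ scales by $\Lambda^{m-2}$, so if we can first arrange one metric $\tilde g_0\in[\![g]\!]$ with $\int_M R^{\tilde g_0}\,d\mu^{\tilde g_0}<0$ we are done. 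So the crux reduces to: \emph{the hypothesis produces a metric in $[\![g]\!]$ with negative total scalar curvature.} For that I would take the metric $\tilde g = f_1^2 g_1\oplus\cdots\oplus f_l^2 g_l$ with the given $f_j$, and use Theorem \ref{thm:MulticonformalDeformation} together with $d\mu^{\tilde g}=f_1^{m_1}\cdots f_l^{m_l}\,d\mu^g$ to write
\[
\int_M R^{\tilde g}\,d\mu^{\tilde g}
=\sum_{i=1}^l\int_M\frac{R^g_i+\rho_i}{f_i^2}\,f_1^{m_1}\cdots f_l^{m_l}\,d\mu^g .
\]
The hypothesis says the $i$-th term is negative; the remaining terms need not be, so a single choice of $f_j$ may not suffice. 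The fix is to dampen the other factors: replace $f_j$ by $\varepsilon^{?}$-modified functions so that only the good direction contributes. Concretely, I would consider, for small $\varepsilon>0$, the factors $f_j^{\varepsilon}:=(f_j)^{\varepsilon}$ for $j\ne i$ and keep $f_i$, or more robustly interpolate toward the constant-$1$ functions in all directions $j\ne i$, and track how the integral varies; by continuity and the scale-invariance of $\rho_i$, for appropriate parameters the $i$-th integral stays negative while the others become small, yielding a genuinely negative total.

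**Main obstacle.** The delicate point is precisely decoupling the sign: the quantity $\int_M\frac{R^g_i+\rho_i}{f_i^2}f_1^{m_1}\cdots f_l^{m_l}\,d\mu^g$ involves all the $f_j$ through the volume weight and through the cross terms in $\rho_i$ (the $\langle\grad^g_i f_i,\grad^g_i f_j\rangle$ and $\langle\grad^g_i f_j,\grad^g_i f_k\rangle$ terms), so one cannot simply set the other factors to $1$ without affecting the $i$-th integral. I expect the proof to use a scaling/homogenization argument: replace $f_j$ by $f_j$ composed with a steep reparametrization, or introduce a parameter $t$ and set $f_j^{(t)}$ so that $\grad^g_i f_j^{(t)}\to 0$ for $j\ne i$ while $\Vol\to\infty$, and then invoke dominated convergence so that $\int_M\frac{R^g_i+\rho_i^{(t)}}{(f_i^{(t)})^2}f_1^{(t)\,m_1}\cdots f_l^{(t)\,m_l}\,d\mu^g$ stays bounded away from $0$ from below (negative) while the total volume blows up. Once the existence of \emph{one} metric with negative total scalar curvature is secured, homothetic rescaling $\Lambda^2\tilde g$ finishes it: $E(\Lambda^2\tilde g)=\Lambda^{m-2}\int_M R^{\tilde g}\,d\mu^{\tilde g}\big/\bigl(\Lambda^m\Vol(M,\tilde g)\bigr)^{(m-2)/m}=\int_M R^{\tilde g}\,d\mu^{\tilde g}\big/\Vol(M,\tilde g)^{(m-2)/m}\to$ a fixed negative number, so instead one lets the \emph{negativity} grow by iterating the damping construction, giving $E\to-\infty$.
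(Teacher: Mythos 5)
Your proposal has a genuine gap, and the strategy it rests on cannot work as stated. First, the claimed sufficient criterion is false: if $\int_M R^{\tilde g}\,d\mu^{\tilde g}\le -c<0$ while $\Vol(M,\tilde g)\to\infty$, then $E(\tilde g)\to 0^-$, not $-\infty$, since the volume enters the denominator with the positive power $(m-2)/m$. Relatedly, reducing the lemma to ``find one metric in $[\![ g ]\!]$ with negative total scalar curvature and then rescale'' cannot succeed, because $E$ is invariant under homotheties $\tilde g\mapsto\Lambda^2\tilde g$ --- as you observe yourself at the end --- and the closing appeal to ``iterating the damping construction'' is not an argument: modifying $f_j$ for $j\neq i$ changes $\rho_i$ through the cross terms $\langle\grad^g_if_i,\grad^g_if_j\rangle$ and $\langle\grad^g_if_j,\grad^g_if_k\rangle$, so the hypothesis integral need not remain negative, and even if it did you would only obtain $E$ bounded above by a fixed negative number, not $E\to-\infty$.

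The missing idea is an \emph{anisotropic} scaling that leaves the given functions untouched: set $\tilde g_\varepsilon=f_1^2g_1\oplus\cdots\oplus\varepsilon^2f_i^2g_i\oplus\cdots\oplus f_l^2g_l$, i.e.\ replace $f_i$ by $\varepsilon f_i$ only. Since each $\rho_j$ is invariant under constant rescalings of the $f_k$, one computes
\begin{align*}
E(\tilde g_\varepsilon)
=\varepsilon^{2\left(\frac{m_i}{m}-1\right)}
\frac{\int_M\frac{R^g_i+\rho_i}{f_i^2}f_1^{m_1}\cdots f_l^{m_l}\,d\mu^g}{\bigl(\int_Mf_1^{m_1}\cdots f_l^{m_l}\,d\mu^g\bigr)^{\frac{m-2}{m}}}
+\varepsilon^{\frac{2m_i}{m}}\sum_{j\neq i}
\frac{\int_M\frac{R^g_j+\rho_j}{f_j^2}f_1^{m_1}\cdots f_l^{m_l}\,d\mu^g}{\bigl(\int_Mf_1^{m_1}\cdots f_l^{m_l}\,d\mu^g\bigr)^{\frac{m-2}{m}}},
\end{align*}
and since $l\ge 2$ forces $m_i<m$, the first exponent is negative: as $\varepsilon\to 0$ the first term tends to $-\infty$ (its integral is negative by hypothesis) while the second tends to $0$. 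This single one-parameter family, rather than any volume blow-up or damping of the other factors, is what yields $\inf_{\tilde g\in[\![ g ]\!]}E(\tilde g)=-\infty$; your proposal never identifies this amplification mechanism, so the decoupling problem you flag as the main obstacle is left unresolved.
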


\begin{proof}
Define $\tilde{g}_{\varepsilon}=f_1^2g_1\oplus  \cdots \oplus  f_{i-1}^2g_{i-1} \oplus  \varepsilon^2f_i^2g_i \oplus  f_{i+1}^2g_{i+1} \oplus  \cdots \oplus  f_l^2g_l$. The scale invariant property of $\rho_i$ shows 
\begin{align*}
E(\tilde{g}_{\varepsilon})
=
\varepsilon^{2\left(\frac{m_i}{m}-1\right)}
	\frac{{\color{blue}\int_M\frac{R^g_i+\rho_i}{f_i^2}f_1^{m_1}\cdots f_l^{m_l}d\mu^g}}
		{\left(\int_Mf_1^{m_1}\cdots f_l^{m_l}d\mu^g\right)^{\frac{m-2}{m}}}
+\varepsilon^{2\frac{m_i}{m}}
	\sum_{j\neq i}^l\frac{{\color{blue}\int_M\frac{R^g_j+\rho_j}{f_j^2}f_1^{m_1}\cdots f_l^{m_l}d\mu^g}}
		{\left(\int_Mf_1^{m_1}\cdots f_l^{m_l}d\mu^g\right)^{\frac{m-2}{m}}}.  
\end{align*}
Since $m_i<m$, $\lim_{\varepsilon\to 0}E(\tilde{g}_{\varepsilon})=-\infty$.  
\end{proof}

\begin{theorem}\label{thm:InfYamabeConstant}
Let $(M^m, g=\langle \cdot, \cdot \rangle)=(M_1^{m_1}, g_1)\times \dots\times (M_l^{m_l}, g_l)$ 
be a direct product of closed connected Riemannian manifolds, 
where $l\ge 2$ and $m_1, \dots, m_l\ge 1$.  
If $m\ge 3$, then 
\begin{align}\label{eq:InfYamabeConstant}
\inf_{[\tilde{g}]\subset [\![ g ]\!]}\mu(M, [\tilde{g}])=\inf_{\tilde{g}\in [\![ g ]\!]}E(\tilde{g})
=-\infty.  
\end{align}
\end{theorem}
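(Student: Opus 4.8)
The plan is to derive the theorem from Lemma~\ref{lem:YamabeInf}, so the whole task reduces to exhibiting, for some index $i$, functions $f_1,\dots,f_l\in C^{\infty}_+(M)$ with $\int_M\frac{R^g_i+\rho_i}{f_i^2}f_1^{m_1}\cdots f_l^{m_l}\,d\mu^g<0$. The equality $\inf_{[\tilde g]\subset [\![ g ]\!]}\mu(M,[\tilde g])=\inf_{\tilde g\in [\![ g ]\!]}E(\tilde g)$ is purely formal: every $\tilde g\in [\![ g ]\!]$ satisfies $[\tilde g]\subset [\![ g ]\!]$ by \eqref{eq:FundamentalInclusion} (with $\tilde g$ in the role of $g$), and $\mu(M,[\tilde g])=\inf_{\hat g\in[\tilde g]}E(\hat g)\le E(\tilde g)$; taking the infimum over $\tilde g\in [\![ g ]\!]$ on both sides, and using $[\tilde g]\subset [\![ g ]\!]$ for the reverse inequality, yields the equality. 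Since conformally changing each $g_i$ inside $[g_i]$ alters neither $ [\![ g ]\!] $ nor either infimum, I will assume each $g_i$ has constant scalar curvature and $g=g_1\oplus\cdots\oplus g_l$ is a genuine direct product; this is exactly what makes the factor-wise integration by parts built into \eqref{eq:KeyIntegralFormula} legitimate.

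For the construction, fix one factor $M_p$, a non-constant $\varphi\in C^{\infty}(M_p)$ viewed as a function on $M$, and set $f_j=\exp(c_j\varphi)$ for real constants $\boldsymbol{c}=(c_1,\dots,c_l)$ to be chosen. Because $g$ is a product and every $f_j$ is constant along the factors other than $M_p$, \eqref{eq. Definition Rho} gives $\rho_i\equiv 0$ for $i\ne p$, while $\langle\grad^g_p f_j,\grad^g_p f_k\rangle/(f_jf_k)=c_jc_k\lvert\grad^g_p\varphi\rvert^2$. Feeding this into \eqref{eq:KeyIntegralFormula} with $(q_1,\dots,q_l)=(m_1,\dots,m_l)$, only the $i=p$ summand survives, and one obtains
\[
\int_M\frac{R^g_p+\rho_p}{f_p^2}\,f_1^{m_1}\cdots f_l^{m_l}\,d\mu^g
=R^{g_p}\int_M e^{\sigma\varphi}\,d\mu^g
+\langle\boldsymbol{c},B^p\boldsymbol{c}\rangle\int_M\lvert\grad^g_p\varphi\rvert^2\,e^{\sigma\varphi}\,d\mu^g ,
\]
where $\sigma=\sum_j m_jc_j-2c_p$, $B^p=B^p(m_1,\dots,m_l)$ is the symmetric matrix of \eqref{eq:KeyIntegralFormula}, and a short computation (equivalently \eqref{eq:KeyPointwiseFormula} plus integration by parts along $M_p$) identifies $\langle\boldsymbol{c},B^p\boldsymbol{c}\rangle=(\sigma+c_p)^2-(m_p-1)c_p^2-\sum_{j\ne p}m_jc_j^2$. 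Both integrals on the right are strictly positive, so it suffices to choose $\boldsymbol{c}$ so that $\sigma$ stays bounded — keeping $e^{\sigma\varphi}$, hence the two integrals, bounded — while $\langle\boldsymbol{c},B^p\boldsymbol{c}\rangle\to-\infty$; then the left-hand side is eventually negative and Lemma~\ref{lem:YamabeInf} applies.

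It remains to make this choice in each dimension pattern permitted by $l\ge 2$, $m\ge 3$, $m_1,\dots,m_l\ge 1$. If $l\ge 3$: take $M_p$ arbitrary, pick two further indices $q,r$, and set $c_p=0$, $m_qc_q+m_rc_r=0$ with $(c_q,c_r)\ne 0$, the remaining $c_j=0$; then $\sigma=0$ and $\langle\boldsymbol{c},B^p\boldsymbol{c}\rangle=-(m_qc_q^2+m_rc_r^2)<0$, which goes to $-\infty$ under $\boldsymbol{c}\mapsto t\boldsymbol{c}$. If $l=2$ and some factor has dimension $\ge 3$: take $M_p$ to be that factor, $M_q$ the other, and set $m_qc_q=-(m_p-2)c_p$ so $\sigma=0$; then $\langle\boldsymbol{c},B^p\boldsymbol{c}\rangle=-(m_p-2)(m_p+m_q-2)c_p^2/m_q<0$, again $\to-\infty$ as $c_p\to\infty$. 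If $l=2$ and both factors have dimension $2$ (resp.\ dimensions $2$ and $1$) — the only remaining possibilities under $m\ge 3$: take $M_p$ of dimension $2$; since then $\sigma=m_qc_q$ does not involve $c_p$, fixing $c_q=-1$ and letting $c_p\to+\infty$ keeps $\sigma$ fixed while $\langle\boldsymbol{c},B^p\boldsymbol{c}\rangle=-2m_qc_p+m_q(m_q-1)\to-\infty$. This finishes the construction.

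The only substantive point is this final case analysis: one must verify that $\boldsymbol{c}\mapsto\langle\boldsymbol{c},B^p\boldsymbol{c}\rangle$ takes a negative value along some ray on which $\sigma$ is bounded, for every admissible $(m_1,\dots,m_l)$. When $l\ge 3$ or some $m_i\ge 3$ the hyperplane $\{\sigma=0\}$ already carries such a ray, but in the borderline case $l=2$ with both $m_i=2$ (and likewise with dimensions $2$ and $1$) the form $\langle\cdot,B^p\cdot\rangle$ vanishes identically on $\{\sigma=0\}$; the remedy is the observation that $\sigma=\sum_j m_jc_j-2c_p$ is independent of $c_p$ precisely when $m_p=2$, which lets $c_p$ run to infinity without moving the weight $e^{\sigma\varphi}$. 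One should also double-check that the reduction to $g$ being a direct product is indeed what validates the factor-wise integration by parts encoded in \eqref{eq:KeyIntegralFormula}.
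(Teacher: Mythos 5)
Your proposal is correct, and it reaches the paper's conclusion by the same overall skeleton but a genuinely different construction. Like the paper, you reduce everything to Lemma \ref{lem:YamabeInf} and to making $\int_M\frac{R^g_p+\rho_p}{f_p^2}f_1^{m_1}\cdots f_l^{m_l}\,d\mu^g$ negative with $f_j=\exp(a_j\circ\varphi)$, $\varphi$ depending on one factor, using the weighted integral formula of Sect.~\ref{sect:Formulas} with $(q_1,\dots,q_l)=(m_1,\dots,m_l)$. The paper then reduces WLOG to $l=2$, $m_1\ge 2$ by grouping factors and takes \emph{bounded oscillating} profiles $a_1=\sin(\sqrt{\alpha}\,\theta)$, $a_2=-\beta\sin(\sqrt{\alpha}\,\theta)$: the weight $f_1^{m_1-2}f_2^{m_2}$ stays uniformly bounded because the $a_j$ are bounded, a one-parameter choice of $\beta$ makes the gradient coefficient negative, and letting the frequency $\alpha\to\infty$ drives the integral to $-\infty$ with no case distinctions. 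You instead take \emph{linear} exponents $f_j=e^{c_j\varphi}$ and control the weight by holding $\sigma=\sum_jm_jc_j-2c_p$ fixed while $\langle \boldsymbol{c},B^p\boldsymbol{c}\rangle\to-\infty$; your identity $\langle\boldsymbol{c},B^p(m_1,\dots,m_l)\boldsymbol{c}\rangle=(\sigma+c_p)^2-(m_p-1)c_p^2-\sum_{j\ne p}m_jc_j^2$ checks out against \eqref{eq:KeyIntegralFormula} (equivalently against the simplified formula at the end of Sect.~\ref{sect:Formulas}), and your case analysis is complete: $l\ge3$; $l=2$ with some $m_i\ge3$; and the borderline cases of dimensions $(2,2)$ and $(2,1)$, where the form really does vanish on $\{\sigma=0\}$ and your observation that $\sigma$ is independent of $c_p$ exactly when $m_p=2$ rescues the argument. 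What the paper's route buys is uniformity (no closed form of $B^p$, no case analysis, and the $l\ge 2$, $m_i\ge1$ generality is absorbed by the grouping trick); what yours buys is an explicit, non-oscillatory family in which the negativity is a transparent quadratic-form computation. Two minor remarks: the normalization to constant scalar curvature $g_i$ is unnecessary (bounding $\lvert R^{g_p}\rvert$ by its maximum suffices, as the paper does), and it is worth stating that since $\sigma$ is constant along each of your families and $\varphi$ is nonconstant, $\int_M\lvert\grad^g_p\varphi\rvert^2e^{\sigma\varphi}d\mu^g$ is a fixed positive number, so $\langle\boldsymbol{c},B^p\boldsymbol{c}\rangle\to-\infty$ indeed makes the whole integral eventually negative.
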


\begin{proof}
Suppose there exist $\varphi: M\to\real$ and $a_j:\real\to\real$ such that $\log f_j=a_j\circ \varphi$ for all $j\in\{1, \dots, l\}$.  
We may assume without loss of generality $l=2$, $m_1\ge 2$. For $i=1$, 
\begin{align*}
&\int_M\frac{\rho_i}{f_i^2}f_1^{m_1}\cdots f_l^{m_l}d\mu^g\\
&=(m_1-1)(m_1-2)\int_M(a_1'\circ\varphi)^2\lvert \grad_1 \varphi\rvert^2 f_1^{m_1-2}f_2^{m_2}d\mu^g\\
&\quad+m_2(m_2-1)\int_M(a_2'\circ\varphi)^2\lvert\grad_1\varphi\rvert^2f_1^{m_1-2}f_2^{m_2}d\mu^g\\
&\quad+2(m_1-1)m_2\int_M(a_1'\circ\varphi)(a_2'\circ\varphi)\lvert\grad_1\varphi\rvert^2 f_1^{m_1-2}f_2^{m_2}d\mu^g.  
\end{align*}
For real numbers $\alpha, \beta>0$ to be specified later, set $a_1(\theta)=\sin(\sqrt{\alpha}\theta)$, $a_2(\theta)=-\beta\sin(\sqrt{\alpha}\theta)$.  
Compute
\begin{align*}
&(m_1-1)(m_1-2)(a_1'(\theta))^2
+m_2(m_2-1)(a_2'(\theta))^2
+2(m_1-1)m_2(a_1'(\theta))(a_2'(\theta))\\
&=\alpha\cos^2\theta\left((m_1-1)(m_1-2)-2(m_1-1)m_2\beta+m_2(m_2-1)\beta^2\right)\\
&=:-\alpha\gamma\cos^2\theta
\end{align*}
and observe that $\gamma>0$ for a good choice of $\beta$, provided that $m_1\ge 2$ and $m_2\ge 1$.  
More precisely, if $m_2=1$, then take $\beta$ large enough; 
if $m_2\ge 2$, then take $\beta>0$ slightly smaller than the larger root of the corresponding quadratic form.  
Therefore, for $i=1$, 
\begin{align*}
&\int_M\frac{\rho_i}{f_i^2}f_1^{m_1}\cdots f_l^{m_l}d\mu^g\\
&=\int_MR_1e^{(m_1-2-\beta m_2)\sin(\sqrt{\alpha}\varphi)}d\mu^g
	-\alpha\gamma\int_M(\cos^2\varphi)\lvert\grad_1\varphi\rvert^2e^{(m_1-2-\beta m_2)\sin(\sqrt{\alpha}\varphi)}d\mu^g\\
&\le e^{\lvert m_1-2-\beta m_2\rvert}\max_{M_1}\lvert R_1\rvert\int_Md\mu^g
	-\alpha\gamma e^{-\lvert m_1-2-\beta m_2\rvert}\int_M (\cos^2\varphi)\lvert\grad_1\varphi\rvert^2d\mu^g.  
\end{align*}
For the fixed $\beta>0$, we can take $\alpha>0$ so large that the right hand side is negative. 
Hence \eqref{eq:InfYamabeConstant} holds by Lemma \ref{lem:YamabeInf}. 
\end{proof}

Note that Theorem \ref{eq:InfYamabeConstant} implies Theorem \ref{thm:main} (2), 
since every conformal class $[g]$ on a closed connected manifold $M$ has a unit-volume metric of scalar curvature constantly equal to the Yamabe constant $\mu(M, [g])$. 

\section{Multiconformal metrics of permutation type}\label{sect:WarpedProduct}
In Sect.~\ref{sect:InfYamabeConst}, we saw that the scalar curvature of a multiconformal metric $\tilde{g}\in [\![g]\!]$ can be negative everywhere even in the case (1) of Theorem \ref{thm:SignOfMulticonformalClass}. 
In this section, we show that such negative scalar curvature metrics cannot be of permutation type in the sense of Definition \ref{def:PermutationType}. 

Let $(M, g)=(M_1, g_1)\times\cdots\times (M_l, g_l)$ be a direct product of Riemannian manifolds. 
For functions $f_1, \dots, f_l\in C^{\infty}_+(M)$, we may associate the $(l\times l)$-matrix 
\begin{align}\label{eq:JacobianMatrix}
\begin{pmatrix}
d_1f_1&\cdots &d_1f_l\\
\vdots&&\vdots\\
d_lf_1&\cdots &d_lf_l
\end{pmatrix}
\end{align}
of $1$-forms on $M$. 
In view of this matrix, we impose the following conditions on the multiconformal factors. 

\begin{definition}\label{def:PermutationType}
Let $\tilde{g}=f_1^2g_1\oplus\cdots\oplus f_l^2g_l$. 
We say $\tilde{g}$ has off-diagonal type if $f_i$ is constant along $M_i$ for every $i$. 
For a map $\sigma: \{1, \dots, l\}\to \{1, \dots, l\}$, we say $\tilde{g}$ has type $\sigma$ if $f_i$ is constant along $M_1\times\cdots\times M_{\sigma(i)-1}\times M_{\sigma(i)+1}\times\cdots\times M_l$ for every $i$. 
If $\tilde{g}$ has type $\sigma$ for some bijection $\sigma$, then $\tilde{g}$ is said to have permutation type. 
\end{definition}

\noindent We remark that a multiconformal metric $\tilde{g}=f_1^2g_1\oplus\cdots\oplus f_l^2g_l$ has off-diagonal and permutation type, respectively, if and only if \eqref{eq:JacobianMatrix} has zero diagonal entries and is a generalized permutation matrix. 

\begin{remark}\label{rmk:terminology}
Related terminology is the following. 
The notion of \emph{warped products} in the sense of Bishop--O'Neill (cf.~\cite[\S 7]{MR0251664}, \cite[\S 7]{MR719023}) 
has been generalized to various situations.  
We remark that \emph{doubly warped products} can have two different meanings; 
some authors\footnote{
	e.g.~Allison \cite[Definition 2.2]{MR1119654}, 
	Yang \cite[p.~203]{MR1651555}, 
	\"Unal \cite[Definition 2.1]{MR1868561}, 
	Brozos-V\'azquez--Garc\'\i a-R\'\i o--V\'azquez-Lorenzo \cite[Remark 5]{MR2121707}, 
	Olteanu \cite[Definition 1]{MR3525858}.  
} deal only with two factors while others\footnote{
	e.g.~Zucker \cite[p.~215]{MR684171}, 
	Gromov--Lawson \cite[p.~188]{MR720933}, 
	Ivey \cite{MR1207538}, 
	Petersen \cite[Chapter 1, \S 4]{MR2243772}, 
	Walsh \cite[p.~6]{MR2789750}.  
} need three factors to define them.  
The term \emph{multiply warped products} seems to be unambiguous\footnote{
	cf.~Br\"uning \cite[p.~303]{MR1162191}, 
	\"Unal \cite[Definition 2.1]{MR1762779}, 
	Dobarro--\"Unal \cite[Definition 2.1]{MR2157416}, 
	U\u{g}uz\linebreak[0]--\linebreak[0]Bilge \cite[\S 2.2]{MR2782484}, 
	Chen \cite[p.~13]{MR3525850}.  
}, but it conflicts with the first meaning of doubly warped products.  

\emph{Twisted products} in the sense of Chen \cite[p.~66]{MR627323}, 
also called \emph{umbilic products} in earlier work of Bishop \cite[p.~27]{MR0334078}, 
are defined on direct product manifolds, which are topologically not twisted.  
Note that Bishop--O'Neill \cite[p.~29]{MR0251664} used the term \emph{warped bundles} 
for the generalization of warped products to (possibly topologically twisted) bundles.  
These notions are generalized, 
depending on the authors' preferences, 
to 
\emph{umbilic products}\footnote{
	cf.~Gauchman \cite[Definition 1]{MR640977}.  
}, \emph{twisted products}\footnote{
	cf.~Koike \cite[p.~3]{MR1212293}, 
	Meumertzheim--Reckziegel--Schaaf \cite[Definition 2]{MR1726218}.  
}, and \emph{doubly} or \emph{multiply twisted products}\footnote{
	cf.~Ponge--Reckziegel \cite[p.~15]{MR1245571}, 
	Rovenskii \cite[Definition 2.6] {MR1770408}, 
	Fern\'andez-L\'opez--Garc\'\i a-R\'\i o--Kupeli--\"Unal \cite[p.~214]{MR1865565}, 
	Uddin \cite[p.~35]{MR2639329}, 
	Wang \cite[p.~1]{MR3206817}.  
} etc. 
\end{remark}

The following implies Theorem \ref{thm:sub2} (3). 
\begin{theorem}\label{thm:NotOfAnyWarpedProductType}
Let $(M, g)=(M_1, g_1)\times\cdots\times (M_l, g_l)$ be a direct product of closed Riemannian manifolds, 
and assume $R^g\ge 0$. 
If a multiconformal metric $\tilde{g}=f_1^2g_1\oplus\cdots\oplus f_l^2g_l$ has permutation type and $R^{\tilde{g}}\le 0$, 
then $f_1, \dots, f_l$ are constant; 
in particular, $R^{\tilde{g}}\equiv R^g\equiv 0$. 
\end{theorem}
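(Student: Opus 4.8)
The plan is to unwind the permutation-type hypothesis, observe that the scalar-curvature formula of Section~\ref{sect:Karcher} then collapses almost entirely, and feed the result into the integral identity \eqref{eq:KeyIntegralFormula}, playing it off against the two sign constraints $R^g\ge 0$ and $R^{\tilde g}\le 0$. The first move is to note that, since $\sigma$ is a bijection, ``$\tilde g$ has type $\sigma$'' says exactly that each $f_i$ is a function on $M_{\sigma(i)}$ alone, hence $\grad^g_i f_j\equiv 0$ and $\Delta^g_i f_j\equiv 0$ whenever $j\ne\sigma^{-1}(i)$, and $(M,\tilde g)$ is a genuine Riemannian direct product over the cycles of $\sigma$. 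Consequently the inner double sum in \eqref{eq:KeyIntegralFormula} reduces, for each $i$, to the single term $b^i_{\sigma^{-1}(i)\,\sigma^{-1}(i)}\,|\grad^g_i f_{\sigma^{-1}(i)}|^2/f_{\sigma^{-1}(i)}^2$, so the identity becomes a sum of $l$ one-dimensional quadratic forms. Computing the coefficient as in the proof of Theorem~\ref{thm:SignOfMulticonformalClass} gives, for a non-fixed $i$, $b^i_{\sigma^{-1}(i)\,\sigma^{-1}(i)}(q)=-m_{\sigma^{-1}(i)}(m_{\sigma^{-1}(i)}+1-2q_{\sigma^{-1}(i)})$, which is negative for the weight $q=(0,\dots,0)$ and nonnegative for the weight $q=(m_1,\dots,m_l)$; at a fixed $i$ one gets $b^i_{ii}(q)=(m_i-1)(2q_i-m_i-2)$, and there the $i$-th block of $\tilde g$ is an ordinary conformal change of $g_i$ on $M_i$, to be handled by the conformal trichotomy recalled in the introduction.

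The strategy is then to first reduce to the case that every factor is scalar flat, $R^{g_i}\equiv 0$, and afterwards run \eqref{eq:KeyIntegralFormula} with $q=(m_1,\dots,m_l)$. For the reduction: if $\mu(M_i,[g_i])\le 0$ for all $i$ this is elementary, since then each $R^{g_i}$ is $\le 0$ somewhere (otherwise $g_i$ itself would be a positive-scalar-curvature metric in $[g_i]$), so $\sum_i\min_{M_i}R^{g_i}\le 0$, while $R^g=\sum_iR^{g_i}\ge 0$ forces $\sum_i\min_{M_i}R^{g_i}\ge 0$; hence $\min_{M_i}R^{g_i}=0$ for every $i$, so $R^{g_i}\ge 0$, so $R^{g_i}\equiv 0$ by the trichotomy. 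The remaining possibility, $\mu(M_{i_0},[g_{i_0}])>0$ for some $i_0$, I would rule out by contradiction with $R^{\tilde g}\le 0$: since $f_{\sigma^{-1}(i_0)}$ lives on $M_{i_0}$, testing the strictly positive conformal Laplacian $-\tfrac{4(m_{i_0}-1)}{m_{i_0}-2}\Delta^{g_{i_0}}+R^{g_{i_0}}$ of $M_{i_0}$ against a suitable power of $f_{\sigma^{-1}(i_0)}$ produces a lower bound for the $R^{g_{i_0}}$-contribution in \eqref{eq:KeyIntegralFormula} that cannot be reconciled with $\int_M R^{\tilde g}\,(\text{weight})\,d\mu^g\le 0$.

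Once $R^{g_i}\equiv 0$ for every $i$, applying \eqref{eq:KeyIntegralFormula} against $d\mu^{\tilde g}=f_1^{m_1}\cdots f_l^{m_l}\,d\mu^g$ kills the $R^g_i$-terms and leaves coefficients $b^i_{\sigma^{-1}(i)\,\sigma^{-1}(i)}(m)=m_{\sigma^{-1}(i)}(m_{\sigma^{-1}(i)}-1)$ (resp.\ $(m_i-1)(m_i-2)$ at fixed $i$) that are $\ge 0$, so the right-hand side is $\ge 0$; but $R^{\tilde g}\le 0$ makes $\int_M R^{\tilde g}\,d\mu^{\tilde g}\le 0$, so the identity is zero and every summand vanishes. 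For every factor of dimension $\ge 2$ other than a surface sitting at a fixed point of $\sigma$ this forces $\grad^g f\equiv 0$; the surface fixed points, where the coefficient degenerates, I would finish off by Gauss--Bonnet (a metric conformal to a flat torus or Klein bottle with nonpositive scalar curvature is a constant multiple of it). Hence all $f_i$ are constant, $\tilde g=c_1g_1\oplus\cdots\oplus c_lg_l$ is a direct product, and $R^{\tilde g}=\sum_iR^{g_i}/c_i\equiv 0\equiv\sum_iR^{g_i}=R^g$.

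The step I expect to be the main obstacle is the middle one, the exclusion of a factor of positive Yamabe type. In contrast to Theorem~\ref{thm:SignOfMulticonformalClass}, the hypothesis $R^g\ge 0$ bounds only the sum $\sum_iR^g_i$ and not the individual factor curvatures, and the integral formula weights the factors unequally through the $f_i^{-2}$, so a positive factor curvature is \emph{a priori} allowed (indeed $[\![g]\!]$ may well contain positive-scalar-curvature metrics); showing that the permutation-type structure nonetheless rules it out is what forces a genuine single-factor argument, testing the conformal Laplacian of $M_{i_0}$ against a power of $f_{\sigma^{-1}(i_0)}$ in the spirit of the Dobarro--Lami Dozo computation recalled in the introduction.
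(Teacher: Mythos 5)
Your middle step---excluding a factor with $\mu(M_{i_0},[g_{i_0}])>0$ and thereby reducing to the case $R^{g_i}\equiv 0$ for every $i$---is a genuine gap, and not one that can be filled: under the hypotheses as you are using them, there is no contradiction to be extracted. Take $l=2$, $M_1=S^2$ with the round metric scaled so that $R^{g_1}\equiv 1$, $M_2$ a closed hyperbolic surface scaled so that $R^{g_2}\equiv -1$, and $f_1\equiv 1$, $f_2\equiv \tfrac12$. Then $R^g\equiv 0\ge 0$, the metric $\tilde g=f_1^2g_1\oplus f_2^2g_2$ is of permutation type (constant factors have every type), and $R^{\tilde g}\equiv 1-4=-3\le 0$, yet $\mu(M_1,[g_1])>0$ and neither factor is scalar flat. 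So your case (b) is perfectly consistent with the assumptions, the conformal-Laplacian test you hope to run against a power of $f_{\sigma^{-1}(i_0)}$ cannot produce the desired contradiction, and the intermediate goal ``every $R^{g_i}\equiv 0$'' is not implied by the hypotheses at all. (The same example shows that the sign information one really needs is the factorwise assumption $R^{g_i}\ge 0$ of Theorem \ref{thm:sub2}, which yields the pointwise inequality $\sum_i R^g_i/f_i^2\ge 0\ge R^{\tilde g}$; the total sign $R^g\ge 0$ alone does not control the weighted sum $\sum_i R^g_i/f_i^2$, nor the conclusion $R^{\tilde g}\equiv 0$.)

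The paper's proof avoids any such reduction, and this is where your detour should be replaced. Under permutation type the integral formula \eqref{eq:IntegralRhoi} collapses to \eqref{eq:ProofWarpedAux}, and one simply chooses all exponents $q_j$ large enough that every coefficient $(m_i-1)(2q_i-m_i-2)$ and $m_j(2q_j-m_j-1)$ is nonnegative; then the right-hand side of \eqref{eq:ProofWarpedAux} is $\ge 0$ irrespective of the signs of the $R^{g_i}$, while the factorwise curvature sign together with $R^{\tilde g}\le 0$ makes the integrand on the left pointwise nonpositive, so it vanishes identically and each gradient term (with strictly positive coefficient for $q$ large) is forced to vanish, giving $f_i$ constant; the scalar flatness of the factors then falls out at the end instead of being needed at the start. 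Note also that your specific weight $q=(m_1,\dots,m_l)$ is a poor choice even for your final step: the off-diagonal coefficient becomes $m_j(m_j-1)$, which vanishes when $m_j=1$, so a circle factor warped over another factor escapes your argument entirely (your Gauss--Bonnet patch only covers surface factors at fixed points of $\sigma$), whereas taking $q_j$ strictly larger, as the paper does, makes all relevant coefficients positive at no extra cost.
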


\begin{proof}
Let $\tilde{g}=f_1^2g_1\oplus\cdots\oplus f_l^2g_l$ has type $\sigma$ for some permutation $\sigma$. 
We may assume $\sigma(i)\neq i$ for every $i$. 
Indeed, if $\sigma(j)=j$ for some $j$, 
then $(M, \tilde{g})$ is isomorphic to 
\begin{align*}
(M_j, f_j^2g_j)\times \left(\prod_{i\neq j}M_i, \bigoplus_{i\neq j}f_i^2g_i\right)
\end{align*}
The integral formula \eqref{eq:IntegralRhoi} then simplifies to 
\begin{align*}
{\color{blue}\int_M\frac{\rho_i}{f_i^2}f_1^{q_1}\cdots f_l^{q_l}d\mu^g}
&=(m_i-1)(2q_i-m_i-2)\int_M\frac{{\color{blue}\lvert \grad^g_i f_i\rvert^2}}{f_i^4} f_1^{q_1}\cdots f_l^{q_l}d\mu^g\\
&\quad+\sum_{j\neq i}m_j(2q_j-m_j-1)\int_M\frac{{\color{olive}\lvert\grad^g_if_j\rvert^2}}{f_i^2f_j^2} f_1^{q_1}\cdots f_l^{q_l}d\mu^g, 
\end{align*}
so we obtain
\begin{align}\label{eq:ProofWarpedAux}\begin{split}
&\int_M\left(R^{\tilde{g}}-\sum_{i=1}^l\frac{R^g_i}{f_i^2}\right)f_1^{q_1}\cdots f_l^{q_l}d\mu_g
=\sum_{i=1}^l\int_M\frac{\rho_i}{f_i^2}f_1^{q_1}\cdots f_l^{q_l}d\mu^g\\
&=\sum_{i=1}^l(m_i-1)(2q_i-m_i-2)\int_M\frac{{\color{blue}\lvert \grad^g_i f_i\rvert^2}}{f_i^4} f_1^{q_1}\cdots f_l^{q_l}d\mu^g\\
&\quad+\sum_{i\neq j}m_j(2q_j-m_j-1)\int_M\frac{{\color{olive}\lvert\grad^g_if_j\rvert^2}}{f_i^2f_j^2} f_1^{q_1}\cdots f_l^{q_l}d\mu^g. 
\end{split}\end{align}

For $q_1, \dots, q_l$ chosen largely enough, the right hand side of \eqref{eq:ProofWarpedAux} is nonnegative. 
Hence $R^{\tilde{g}}-\sum_{i=1}^lR^g_i/f_i^2$ cannot be nonnegative and negative at one point, 
since otherwise the left hand side of \eqref{eq:ProofWarpedAux} would become negative. 
\end{proof}

Yang \cite[Theorem 1]{MR1651555} observed that if $(M_1\times M_2, \tilde{g}=f_1^2g_1\oplus  f_2^2g_2)$ is of warped product type and has constant scalar curvature and if $f_i$ is nonconstant for $i=1, 2$, then $R^{\tilde{g}}$ must be indeed zero. 
He then asked whether there exist such scalar flat metrics of nontrivial warped product type. 
Before providing an answer to his question, we generalize Yang's Theorem slightly as follows. 

\begin{theorem}[Kwang-Wu Yang]
Assume that there exists a permutation $\sigma$ of $l$ letters so that $\sigma(i)\neq i$ and $f_i$ is constant along $M_1\times\cdots\times M_{\sigma(i)-1}\times M_{\sigma(i)+1}\times\cdots\times M_l$ for every $i\in\{1, \dots, l\}$. 
If $\tilde{g}=f_1^2g_1\oplus\cdots\oplus f_l^2g_l$ has constant scalar curvature, and if $f_i$ is nonconstant for every $i\in\{1, \dots, l\}$, then $R^{\tilde{g}}=0$. 
\end{theorem}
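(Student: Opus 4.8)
The plan is to use the permutation hypothesis to pin $R^{\tilde g}$, as a function on the product $M_1\times\cdots\times M_l$, into a rigid ``separated'' form, and then differentiate the constancy equation $R^{\tilde g}=c$ along the individual factors $M_k$. First I would simplify the scalar curvature formula of Theorem~\ref{thm:MulticonformalDeformation}. Since $\sigma(i)\neq i$, each $f_i$ is constant along $M_i$, so $\grad^g_if_i=0$; moreover, for $j\neq i$ one has $\grad^g_if_j\neq 0$ exactly when $f_j$ depends on $M_i$, i.e.\ only for the single index $j=\sigma^{-1}(i)$. Feeding this into \eqref{eq. Definition Rho} annihilates all but two of the summands defining $\rho_i$, leaving
\[
\rho_i=-2m_{\sigma^{-1}(i)}\frac{\Delta^g_if_{\sigma^{-1}(i)}}{f_{\sigma^{-1}(i)}}-m_{\sigma^{-1}(i)}\bigl(m_{\sigma^{-1}(i)}-1\bigr)\frac{\lvert\grad^g_if_{\sigma^{-1}(i)}\rvert^2}{f_{\sigma^{-1}(i)}^2}.
\]
Because $f_{\sigma^{-1}(i)}$ factors through $M_{\sigma(\sigma^{-1}(i))}=M_i$, the right-hand side is pulled back from $M_i$; together with the direct-product identity $R^g_i=R^{g_i}$ (also pulled back from $M_i$) and the decomposition $R^{\tilde g}=\sum_i(R^g_i+\rho_i)/f_i^2$, this shows that, with $S_i:=R^g_i+\rho_i\in C^\infty(M_i)$,
\[
R^{\tilde g}=\sum_{i=1}^l\frac{S_i(x_i)}{f_i(x_{\sigma(i)})^2},
\]
the $i$-th summand depending only on the variables $x_i$ and $x_{\sigma(i)}$.

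Next I would reduce to the case of a single cycle. Decompose $\sigma$ into disjoint cycles $C_1,\dots,C_r$, all of length $\ge 2$. For each $t$ the partial sum $\sum_{i\in C_t}S_i(x_i)/f_i(x_{\sigma(i)})^2$ depends only on the variables indexed by $C_t$, and these blocks are pairwise disjoint, so -- the total $R^{\tilde g}$ being constant -- each partial sum is itself constant, and it suffices to show each such constant vanishes. Thus I may assume $\sigma$ is a single $p$-cycle, say $\sigma(i)=i+1\bmod p$, and must show $c:=\sum_{i=1}^pS_i(x_i)/f_i(x_{i+1})^2$ equals $0$. Differentiating $R^{\tilde g}=c$ along a vector field $X_k$ tangent to $M_k$ kills every summand except the $k$-th (through $S_k$) and the $(k-1)$-st (through $f_{k-1}$, which depends on $M_k$), giving
\[
d_{X_k}S_k(x_k)=-\,S_{k-1}(x_{k-1})\,f_k(x_{k+1})^2\,d_{X_k}\!\bigl(1/f_{k-1}^2\bigr)(x_k).
\]
Since $f_{k-1}$ is nonconstant and factors through $M_k$, the $1$-form $d(1/f_{k-1}^2)$ is nonzero somewhere on $M_k$; evaluating the displayed identity there shows that $S_{k-1}(x_{k-1})f_k(x_{k+1})^2$ equals a constant $K_k$, and then $S_k=L_k-K_k/f_{k-1}^2$ on $M_k$ for some constant $L_k$.

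If $p\ge 3$, the indices $k-1$ and $k+1$ are distinct, so $S_{k-1}(x_{k-1})f_k(x_{k+1})^2=K_k$ is a product of a function of $x_{k-1}$ and a strictly positive, nonconstant function of $x_{k+1}$; this forces $K_k=0$ and then $S_{k-1}\equiv 0$. Letting $k$ run over $\{1,\dots,p\}$ gives $S_i\equiv 0$ for every $i$, hence $R^{\tilde g}=\sum_iS_i/f_i^2\equiv 0$. If $p=2$, write $\sigma=(1\,2)$; differentiating along $M_1$ gives $S_2=K/f_1^2$ on $M_2$ and $S_1=L-K/f_2^2$ on $M_1$, and substituting into $R^{\tilde g}=S_1/f_1^2+S_2/f_2^2$ the $K$-terms cancel, leaving $c=L/f_1^2$. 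As $f_1$ is nonconstant, this is possible only if $c=0$ (and $L=0$). In either case $R^{\tilde g}=0$, as claimed.

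The step I expect to be the main obstacle is the first one: carefully tracking the cancellations forced on $\rho_i$ by the permutation hypothesis and verifying that what survives really is a function of $x_i$ alone -- this separability is what makes everything else work. After that the only genuinely new idea is, in the $p=2$ case, substituting the first-order relations back into $R^{\tilde g}=c$; and in both sub-cases one must be careful that the ``product of functions in independent variables being constant forces each to be constant'' arguments do use the nonconstancy of the relevant $f_i$, which is precisely where the hypothesis that every $f_i$ is nonconstant is consumed.
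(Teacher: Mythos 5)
Your proof is correct and follows essentially the same route as the paper: the same simplification of $R^{\tilde g}$ to $\sum_i S_i/f_i^2$ with each $S_i=R^g_i+\rho_i$ pulled back from $M_i$, followed by differentiating the constancy equation along a factor and separating variables to conclude that $S_{k-1}f_k^2$ is constant. Your explicit cycle decomposition and the back-substitution in the $2$-cycle case are a slightly more careful rendering of the paper's endgame (which instead differentiates $R^{\tilde g}=\sum_i c_i/(f_i^2 f_{\sigma(i)}^2)$ a second time), but the substance of the argument is the same.
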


\begin{proof}
The formula \eqref{eq:ScalarCurvatureDifference} simplifies to 
\begin{align}\label{eq:YangTheoremAux1}\begin{split}
R^{\tilde{g}}
&=\sum_{i}\frac{R_i^g}{f_i^2}
	-2\sum_{i\neq j}m_j\frac{\Delta^g_if_j}{f_i^2f_j} 
	-\sum_{i\neq j}m_j(m_j-1)\frac{\lvert\grad^g_if_j\rvert^2}{f_i^2f_j^2}\\
&=\sum_{i=1}^l\frac{1}{f_i^2}\left( R^g_i-2m_{\sigma^{-1}(i)}\frac{\Delta^g_if_{\sigma^{-1}(i)}}{f_{\sigma^{-1}(i)}}-m_{\sigma^{-1}(i)}(m_{\sigma^{-1}(i)}-1)\frac{\lvert\grad^g_if_{\sigma^{-1}(i)}\rvert^2}{f_{\sigma^{-1}(i)}^2}\right). 
\end{split}\end{align}
We fix $i\in\{1, \dots, l\}$ and take a vector field $X$ tangent to $E_{\sigma(i)}$. 
Differentiation of \eqref{eq:YangTheoremAux1} with respect to $X$ yields 
\begin{align*}
0&=X(R^{\tilde{g}})\\
&=
X\left(\frac{1}{f_{i}^2}\right)\cdot\left( R^g_i-2m_{\sigma^{-1}(i)}\frac{\Delta^g_if_{\sigma^{-1}(i)}}{f_{\sigma^{-1}(i)}}-m_{\sigma^{-1}(i)}(m_{\sigma^{-1}(i)}-1)\frac{\lvert\grad^g_if_{\sigma^{-1}(i)}\rvert^2}{f_{\sigma^{-1}(i)}^2}\right)\\
&\quad+\frac{1}{f_{\sigma(i)}^2}X\left(R_{\sigma(i)}^g-2m_{i}\frac{\Delta^g_{\sigma(i)}f_{i}}{f_{i}}-m_{i}(m_{i}-1)\frac{\lvert\grad^g_{\sigma(i)}f_{i}\rvert^2}{f_{i}^2} \right). 
\end{align*}
In other words, 
\begin{align}\label{eq:YangTheoremAux2}\begin{split}
&X\left(\frac{1}{f_{i}^2}\right)\cdot {\color{blue}f_{\sigma(i)}^2 \left( R^g_i-2m_{\sigma^{-1}(i)}\frac{\Delta^g_if_{\sigma^{-1}(i)}}{f_{\sigma^{-1}(i)}}-m_{\sigma^{-1}(i)}(m_{\sigma^{-1}(i)}-1)\frac{\lvert\grad^g_if_{\sigma^{-1}(i)}\rvert^2}{f_{\sigma^{-1}(i)}^2}\right)}\\
&=- X\left(R_{\sigma(i)}^g-2m_{i}\frac{\Delta^g_{\sigma(i)}f_{i}}{f_{i}}-m_{i}(m_{i}-1)\frac{\lvert\grad^g_{\sigma(i)}f_{i}\rvert^2}{f_{i}^2} \right). 
\end{split}\end{align}
We claim that, for every $i\in\{1, \dots, l\}$, 
\begin{align*}
{\color{blue}f_{\sigma(i)}^2\left( R^g_i-2m_{\sigma^{-1}(i)}\frac{\Delta^g_if_{\sigma^{-1}(i)}}{f_{\sigma^{-1}(i)}}-m_{\sigma^{-1}(i)}(m_{\sigma^{-1}(i)}-1)\frac{\lvert\grad^g_if_{\sigma^{-1}(i)}\rvert^2}{f_{\sigma^{-1}(i)}^2}\right)}
\equiv c_i
\end{align*}
for some $c_i\in\real$. 
Indeed, the right hand side of \eqref{eq:YangTheoremAux2} is constant along $M_1\times\cdots\times M_{\sigma(i)-1}\times M_{\sigma(i)+1}\times\cdots\times M_l$, while its left hand side is the multiple of $X\left(\frac{1}{f_{i}^2}\right)$, which is constant along the same space and is not identically equal to zero for a good choice of $X$, and the rest which is constant along $M_{\sigma(i)}$. 
Going back to \eqref{eq:YangTheoremAux1}, we obtain
\begin{align*}
R^{\tilde{g}}=\sum_{i=1}^l\frac{c_i}{f_i^2f_{\sigma(i)}^2}. 
\end{align*}
Differentiation with respect to a vector field $X$ tangent to $M_{\sigma(i)}$ then yields 
\begin{align*}
0=X(R^{\tilde{g}})=X\left(\frac{1}{f_i^2}\right)\left(\frac{c_i}{f_{\sigma(i)}^2}+\frac{c_{\sigma^{-1}(i)}}{f_{\sigma^{-1}(i)}^2}\right). 
\end{align*}
A similar separation of variables then shows 
$\frac{c_i}{f_{\sigma(i)}^2}+\frac{c_{\sigma^{-1}(i)}}{f_{\sigma^{-1}(i)}^2}=0$, whence $c_i=0$ for every $i$. 
Therefore, $R^{\tilde{g}}=0$. 
\end{proof}

Following Yang, we ask whether there exist such scalar flat metrics. 
Theorems \ref{thm:SignOfMulticonformalClass}, \ref{thm:NotOfAnyWarpedProductType} imply the following partial answer to his question. 
If either $\mu(M_i, [g_i])\le 0$ for $i=1, 2$ or $R^{g_i}\ge 0$ for $i=1$ and $2$, then such scalar flat metrics of nontrivial warped product type do not exist. 

\section*{Acknowledgements}
We thank Professors Bernd Ammann and Naoyuki Koike for their comments on the previous version of this article. 
N.~Otoba is supported by the DFG (Deutsche Forschungsgemeinschaft), SFB 1085 Higher Invariants. S.\ Roos was supported by the Hausdorff Center for Mathematics in Bonn.

%
\bibliographystyle{amsplain}
\bibliography{/Users/Nobu/GoogleDrive/References_otoba}
\end{document}